\newtheorem{theorem}{Theorem}[section]
\newtheorem{conjecture}[theorem]{Conjecture}
\newtheorem{corollary}[theorem] {Corollary}
\newtheorem{definition}[theorem]{Definition}
\newtheorem{lemma} [theorem]{Lemma}
\begin{document}

\title{On Distance Antimagic Graphs}
\author{
\normalsize Rinovia Simanjuntak and Kristiana Wijaya\\
\vspace{-1mm}
\normalsize Combinatorial Mathematics Research Group\\
\vspace{-1mm}
\normalsize Faculty of Mathematics and Natural Sciences\\
\vspace{-1mm}
\normalsize Institut Teknologi Bandung, Bandung 40132, Indonesia\\
\vspace{-2mm}
\scriptsize {\small {\bf e-mail}: {\tt rino@math.itb.ac.id, kristiana$\_$wijaya@yahoo.com}}\\\\
}
\date{}
\maketitle

\vspace*{-4mm}
\begin{abstract}
For an arbitrary set of distances $D\subseteq \{0,1, \ldots, diam(G)\}$, a $D$-weight of a vertex $x$ in a graph $G$ under a vertex labeling $f:V\rightarrow \{1,2, \ldots , v\}$ is defined as $w_D(x)=\sum_{y\in N_D(x)} f(y)$, where $N_D(x) = \{y \in V| d(x,y) \in D\}$. A graph $G$ is said to be $D$-distance magic if all vertices has the same $D$-vertex-weight, it is said to be $D$-distance antimagic if all vertices have distinct $D$-vertex-weights, and it is called $(a,d)-D$-distance antimagic if the $D$-vertex-weights constitute an arithmetic progression with difference $d$ and starting value $a$.

In this paper we study some necessary conditions for the existence of $D$-distance antimagic graphs. We conjecture that such conditions are also sufficient. Additionally, we study $\{1\}$-distance antimagic labelings for some cycle-related connected graphs: cycles, suns, prisms, complete graphs, wheels, fans, and friendship graphs.
\end{abstract}

\section{Introduction}

\noindent As standard notation, assume that $G$=$G(V,E)$ is a finite, simple, and undirected graph with $v$ vertices and $e$ edges. By a {\em labeling} we mean a one-to-one mapping that carries a set of graph elements onto a set of numbers, called {\em labels}.\\

\noindent The notion of distance magic labeling was introduced separately in the PhD thesis of Vilfred \cite{Vi94} in 1994 and an article by Miller {\it et. al} \cite{MRS03} in 2003. A {\em distance magic labeling} is a bijection $f:V\rightarrow \{1,2, \ldots , v\}$ with the property that there is a constant {\sf k} such that at any vertex $x$, the {\em vertex-weight} of $x$, $w(x)=\sum_{y\in N(x)} f(y) ={\sf k}$, where $N(x)$ is the set of vertices adjacent to $x$. This labeling was introduced due to two different motivations; as a tool in utilizing magic squares into graphs and as a natural extension of previously known graph labelings: magic labeling \cite{Sed64,KR70} and radio labeling (which is distance-based) \cite{GY92}.\\

\noindent In the last decade, many results on distance magic labeling have been published. Several families of graphs have been showed to admit the labeling \cite{Vi94,Ji99,MRS03,ARSP04,Ra08,Be09,SAS09} and constructions of distance magic graphs have also been studied \cite{Fr07,FKK06,SFMRW09,FKK11,KS12}. It has also been showed that there is no forbidden subgraph characterization for distance magic graph \cite{Vi94,ARSP04,RSP04}. Additionally, an application of the labeling in designing incomplete tournament is introduced in \cite{FKK06}. For more results in distance magic labeling, please refer to Gallian's dynamic survey on graph labelings \cite{Ga}.\\

\noindent O'Neal and Slater \cite{OS11b,OS13} generalized the notion of distance magic labeling to an arbitrary set of distances $D\subseteq \{0,1, \ldots, diam(G)\}$, where $diam(G)$ is the diameter of $G$. As in the previous distance magic labeling, the domain of this new labeling is the set of all vertices and the codomain is $\{1,2, \ldots , v\}$. We define the {\em $D$-vertex-weight} of each vertex $x$ in $G$, $w_D(x)=\sum_{y\in N_D(x)} f(y)$, where $N_D(x) = \{y \in V| d(x,y) \in D\}$. If all vertices in $G$ have the same weight, we call the labeling a {\em $D$-distance magic labeling}.\\

\noindent Recently, Arumugam and Kamatchi \cite{AK12} considered an antimagic version of distance labeling. They defined an {\em $(a,d)$-distance antimagic labeling} of a graph $G$ as a bijection $f:V\rightarrow \{1,2, \ldots ,v\}$ such that the set of all vertex-weights is $\{a, a+d, a+2d, \ldots , a+(v-1)d\}$, where $a$ and $d$ are fixed integers with $d \geq 0$. Any graph which admits such a labeling is called an $(a,d)$-distance antimagic graph. The characterization of $(a,d)$-distance antimagic cycles and $(a,d)$-distance antimagic labelings for paths and prisms were also studied in \cite{AK12}. Froncek proved that disjoint copies of the Cartesian
product of two complete graphs and its complement are $(a,2)$-distance antimagic and $(a,1)$-distance antimagic, respectively (see \cite{Fr13} and \cite{Fr}). He also proved that disjoint copies of the hypercube $Q_3$ is $(a,1)$-distance antimagic. \\

\noindent In addition to the $(a,d)$-distance antimagic labeling, we also consider the following three other labelings.

\begin{definition}
Let $G$ be a graph, $x$ a vertex in $G$, $f$ a bijection from $V$ onto $\{1,2,\ldots,v\}$, and $D\subseteq \{0,1, \ldots, diam(G)\}$.

\noindent The bijection $f$ is called {\bf distance antimagic labeling} if all vertices have distinct vertex-weights. A graph is called {\bf distance antimagic} if it admits a distance antimagic labeling.

\noindent The bijection $f$ is called a {\bf $D$-distance antimagic labeling} if the $D$-vertex-weights are all different. The bijection $f$ is called an {\bf $(a,d)$-$D$-distance antimagic labeling} if all $D$-vertex-weights constitute an arithmetic progression with difference $d$ and starting value $a$, for $a$ and $d$ fixed integers with $d \geq 0$. A graph $G$ is {\bf $D$-distance antimagic} or {\bf $(a,d)$-$D$-distance antimagic} if it admits a $D$-distance antimagic labeling or an $(a,d)$-$D$-distance antimagic labeling, respectively.
\end{definition}

\noindent Note that if $D=\{1\}$, a $D$-distance antimagic labeling is a distance antimagic labeling and similarly an $(a,d)$-$D$-distance antimagic labeling is an $(a,d)$-distance antimagic labeling. If $d=0$, an $(a,0)$-$D$-distance antimagic labeling is a $D$-distance magic labeling. It is clear that if a graph is $(a,d)$-$D$-distance antimagic for $d>0$ then it is also $D$-distance antimagic, but not necessarily the other way around.\\

\noindent In this paper we study some necessary conditions for the existence of $D$-distance antimagic graphs. Additionally, we study distance antimagic labelings for some connected graphs containing one or more cycles: cycles, suns, prisms, complete graphs, wheels, fans, and friendship graphs. Finally, we conjecture that the necessary conditions for the existence of $D$-distance antimagic graphs are also sufficient.

%====================================================================
\section{Main Result}

\noindent We start with a couple of obvious observations.
\begin{lemma}
If a graph contains two vertices with the same neighborhood then it is not distance antimagic.
\label{not-da}
\end{lemma}
\begin{proof}
If $G$ has two vertices with the same neighborhood, say $u$ and $v$, then $w(u)=w(v)$, a contradiction.
\end{proof}

\noindent Let us define a {\em $D$-neighborhood} of a vertex $x$ as the set of all vertices at distance $k$ to $x$, where $k \in D$. Then Lemma \ref{not-da} can be generalized in the following lemma.
\begin{lemma}
If a graphs contains two vertices with the same $D$-neighborhood then it is not $D$-distance antimagic.
\label{not-Dda}
\end{lemma}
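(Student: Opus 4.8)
The plan is to mirror the proof of Lemma \ref{not-da} verbatim, substituting the ordinary neighborhood $N(x)$ by the $D$-neighborhood $N_D(x)$ and the vertex-weight $w$ by the $D$-vertex-weight $w_D$. Suppose for contradiction that $G$ admits a $D$-distance antimagic labeling $f$, yet contains two distinct vertices $u$ and $v$ with the same $D$-neighborhood, that is $N_D(u) = N_D(v)$.

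Since the $D$-vertex-weight is defined purely as a sum over the $D$-neighborhood, $w_D(u) = \sum_{y \in N_D(u)} f(y)$ and $w_D(v) = \sum_{y \in N_D(v)} f(y)$. The equality of the two index sets $N_D(u) = N_D(v)$ forces the two sums to coincide term by term, so $w_D(u) = w_D(v)$. This contradicts the defining property of a $D$-distance antimagic labeling, namely that all $D$-vertex-weights be pairwise distinct, which completes the argument.

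There is essentially no obstacle here, as the statement is an immediate generalization; the only point worth a moment's care is the case $0 \in D$, where each vertex lies in its own $D$-neighborhood. Even then the argument is unaffected: the hypothesis $N_D(u) = N_D(v)$ is a statement about equality of sets, and the equal-sums conclusion follows regardless of whether $u$ or $v$ happen to belong to that common set. Thus the contradiction $w_D(u) = w_D(v)$ stands in full generality.
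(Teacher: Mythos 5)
Your proof is correct and matches the paper's approach exactly: the paper in fact omits the proof of this lemma entirely, presenting it as the evident generalization of Lemma \ref{not-da}, whose one-line argument (equal neighborhoods force equal weights) is precisely what you reproduce with $N_D$ and $w_D$ in place of $N$ and $w$. Your extra remark about the case $0 \in D$ is a sensible sanity check but, as you note, changes nothing.
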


\noindent As a consequence of Lemma \ref{not-da}, we have
\begin{corollary}
All complete multipartite graphs are not distance antimagic.
\label{Kmn}
\end{corollary}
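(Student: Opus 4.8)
The plan is to exhibit, in each complete multipartite graph, two vertices with identical open neighborhoods and then invoke Lemma~\ref{not-da}. Write the graph as $K_{n_1,\dots,n_k}$ with independent vertex classes $V_1,\dots,V_k$, where two vertices are adjacent exactly when they belong to distinct classes. First I would select a class $V_i$ with $|V_i|\ge 2$ and pick distinct vertices $u,v\in V_i$. Because a vertex of $V_i$ is joined to every vertex outside $V_i$ and to no vertex inside $V_i$, we obtain $N(u)=V\setminus V_i=N(v)$. So $u$ and $v$ share a neighborhood, and Lemma~\ref{not-da} gives at once that the graph fails to be distance antimagic.

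The step that needs attention is guaranteeing a class of size at least two, and this is exactly where I expect the main obstacle to sit. If every class is a singleton, then $K_{n_1,\dots,n_k}$ is simply the complete graph $K_v$, and there are no two same-class vertices to compare. I would handle this case directly: in $K_v$ each vertex $x$ has weight $w(x)=\sum_{y\in V}f(y)-f(x)=\tfrac{v(v+1)}{2}-f(x)$, a strictly decreasing function of the distinct labels $f(x)$, so the weights are pairwise distinct and $K_v$ is in fact \emph{distance antimagic}. Hence the corollary is to be understood for complete multipartite graphs having at least one class of size two or more (equivalently, those that are not complete graphs); under this reading the one-line same-class argument above is all that is required, with no further calculation.
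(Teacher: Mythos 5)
Your argument is exactly the one the paper intends: the corollary is stated as an immediate consequence of Lemma~\ref{not-da}, since any two vertices in a common part of $K_{n_1,\dots,n_k}$ with $n_i\ge 2$ have the same neighborhood $V\setminus V_i$. Your caveat about the all-singleton case is also well taken and correct --- the corollary as literally stated would contradict Theorem~\ref{Kn}, which shows $K_n$ is $(a,1)$-distance antimagic and hence distance antimagic, so the statement must indeed be read as excluding complete graphs (equivalently, requiring some part of size at least two).
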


\noindent The following lemma gives us an upper bound for $d$ of an $(a,d)$-distance antimagic labeling of a regular graph.

\begin{lemma}
Let $G$ be an $r$-regular graph.
If $G$ is $(a,d)$-distance antimagic then $d\leq r \frac{v-r}{v-1}$ and $a=\frac{r(v+1)-d(v-1)}{2}$.
\label{ubd}
\end{lemma}
\begin{proof}
\noindent If we consider a particular vertex $x$, it contributes exactly $d(x)$ times to the sum of all vertex-weights, where $d(x)$ is the degree of $x$. Thus,
\[a + (a+d) + \ldots + (v-1)d = \sum_{x\in V(G)} d(x) f(x),\]
which leads to
\[va + d \frac{v(n-1)}{2} = \sum_{x\in V(G)} d(x) f(x).\]
Since $G$ is an $r$-regular graph, then
\[va + d \frac{v(n-1)}{2} = r \sum_{x\in V(G)} f(x) = r \frac{v(v+1)}{2}.\]
Therefore, $d = \frac{r(v+1)-2a}{v-1}$ which gives us the second result.\\

\noindent Now, consider the least possible value of a vertex-weight. Obviously, it has to be equal to $1+2+\ldots+d$, and so $a\geq \frac{r(r+1)}{2}$. This gives the desired upper bound for $d$.
\[d \leq \frac{r(v+1)-2\frac{r(r+1)}{2}}{v-1} =  r \frac{v-r}{v-1}.\]
\end{proof}

\noindent Next we study distance antimagic labelings and $(a,d)-$distance antimagic labelings for some families of graphs containing one or more cycles: cycles, suns, complete graphs, prisms, wheels, fans, and friendship graphs.

%================CYCLE===================================================================
\subsection{Cycle}

\noindent In \cite{AK12}, Arumugam and Kamatchi gave a characterization of $(a,d)$-distance antimagic cycles.
\begin{theorem} \emph{\cite{AK12}} The cycle $C_n$ is $(a,d)$-distance antimagic if and only if $n$ is odd and $d=1$.
\end{theorem}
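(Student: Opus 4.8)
The plan is to prove both directions separately, getting necessity essentially for free from the regular-graph bound and putting all the work into an explicit construction for the sufficiency.

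For the ``only if'' direction I would use that $C_n$ is $2$-regular and apply Lemma~\ref{ubd} with $r=2$ and $v=n$. The bound gives $d\le 2\frac{n-2}{n-1}=2-\frac{2}{n-1}<2$, so $d\le 1$; since an antimagic labeling must produce $n$ distinct vertex-weights we also need $d\ge 1$, forcing $d=1$. Substituting $r=2$, $d=1$ into the formula for $a$ yields $a=\frac{2(n+1)-(n-1)}{2}=\frac{n+3}{2}$. As $a$ is the least vertex-weight and every weight is an integer, $\frac{n+3}{2}\in\mathbb{Z}$, which holds exactly when $n$ is odd. This settles necessity.

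For the ``if'' direction, assuming $n$ odd, I would build an $(a,1)$-distance antimagic labeling with $a=\frac{n+3}{2}$. The key structural remark is that since $\gcd(2,n)=1$, reading the vertices in steps of two, $v_1,v_3,v_5,\dots$, visits every vertex and returns to the start. Writing $g_1,g_2,\dots,g_n$ for the labels met in this order, each weight $w(v_i)=f(v_{i-1})+f(v_{i+1})$ is a sum $g_j+g_{j+1}$ of two cyclically consecutive terms of $g$. Thus the problem reduces to arranging $1,2,\dots,n$ cyclically so that the $n$ consecutive pairwise sums are distinct consecutive integers. I would take $g_j$ to be the representative in $\{1,\dots,n\}$ of $1+(j-1)\frac{n-1}{2}\pmod n$; since $\gcd(\frac{n-1}{2},n)=1$ this is a permutation. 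A direct computation gives first sum $g_1+g_2=a$, and each later sum drops by exactly $1$ (the step $\frac{n-1}{2}$ either adds $\frac{n-1}{2}$ or, on wrap-around, subtracts $\frac{n+1}{2}$, and these alternate so as to lower the running sum by one), so the sums run through $a,\,a+n-1,\,a+n-2,\,\dots,\,a+1$, i.e. all of $\{a,a+1,\dots,a+n-1\}$. Translating $g$ back to a labeling of $C_n$ then furnishes the required labeling.

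The necessity part is immediate from Lemma~\ref{ubd}, so the real content is the construction. The main obstacle I anticipate is verifying rigorously that the consecutive sums of the chosen sequence form an \emph{unbroken} arithmetic progression: this requires careful bookkeeping of when the index wraps modulo $n$ and a proof that the increment pattern is uniform for all odd $n$, not merely for the small cases one checks by hand. Should the single closed-form sequence prove awkward to verify in one stroke, a fallback is to split according to $n\bmod 4$ and supply a zigzag labeling in each case, trading a cleaner formula for a longer but entirely elementary verification.
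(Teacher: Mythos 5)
Your sufficiency construction is correct and attractive: for odd $n$ the step-two reindexing turns the weight condition into arranging $1,\dots,n$ cyclically with consecutive pair-sums forming $n$ consecutive integers, and the sequence $g_j\equiv 1+(j-1)\tfrac{n-1}{2}\pmod n$ does the job. The verification you worry about goes through cleanly: a step either adds $\tfrac{n-1}{2}$ or (on wrap) subtracts $\tfrac{n+1}{2}$; two consecutive wraps would force $g_j\ge n+2$, and two consecutive non-wraps force $g_j=1$, which happens only at $j=1$. Hence $s_1=a=\tfrac{n+3}{2}$, $s_2=a+n-1$, and every later sum drops by exactly $1$, so the weights are $\{a,\dots,a+n-1\}$. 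Note the paper gives no proof of this theorem at all --- it is quoted from \cite{AK12} --- so there is nothing to compare against on that side; the relevant construction the paper does contain (Theorem \ref{Cn}) is a different, non-arithmetic labeling aimed at even cycles.

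The genuine gap is in your necessity argument, at exactly the point where the quoted theorem itself is defective. You dismiss $d=0$ by asserting that an antimagic labeling must produce $n$ distinct weights, but the definition used in this paper (and in \cite{AK12}) explicitly allows $d\ge 0$, and an $(a,0)$-distance antimagic labeling is just a distance magic labeling with no distinctness requirement. In fact $C_4$ \emph{is} distance magic, so the statement as quoted is false as it stands; the paper points this out immediately after citing it and restates the result as Theorem \ref{Cnd1}, adding the case $d=0$, $n=4$. To repair your argument you must actually prove that $C_n$ admits no distance magic labeling for $n\ne 4$ (a short parity/telescoping argument on $w(x_{i+1})-w(x_{i-1})=f(x_{i+2})-f(x_{i-2})$ does it), and you should either flag the $C_4$ exception or state that you are proving the corrected version. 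The remainder of your necessity argument --- $d\le 2-\tfrac{2}{n-1}$ from Lemma \ref{ubd}, hence $d\le 1$, and integrality of $a=\tfrac{n+3}{2}$ forcing $n$ odd when $d=1$ --- is fine.
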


\noindent The characterization missed out a single case when $n=4$ and $d=0$ and so we rewrite the theorem as follow.
\begin{theorem} A cycle $C_n$ has an $(a,d)-$distance antimagic labeling if and only if $d=0$ and $n=4$ or $d=1$ and $n$ is odd.
\label{Cnd1}
\end{theorem}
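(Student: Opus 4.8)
The plan is to first pin down the admissible values of $d$ and then handle them one at a time. Since $C_n$ is $2$-regular, I would apply Lemma~\ref{ubd} with $r=2$ and $v=n$, which gives
\[
d \le 2\,\frac{n-2}{n-1} < 2 ,
\]
so the only nonnegative integer candidates are $d=0$ and $d=1$. The same lemma fixes the starting value as $a=\tfrac{2(n+1)-d(n-1)}{2}$, which will immediately feed the necessity arguments below. It then remains to decide, for each of $d=0$ and $d=1$, exactly which $n$ are feasible, and to exhibit a labeling whenever they are.

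For $d=0$ an $(a,0)$-distance antimagic labeling is precisely a distance magic labeling, so the claim reduces to: $C_n$ is distance magic if and only if $n=4$. Writing the cycle as $v_1,\dots,v_n$, the magic condition is $f(v_{i-1})+f(v_{i+1})=k$ for all $i$. Applying this twice yields $f(v_{i+3})=k-f(v_{i+1})=f(v_{i-1})$, i.e.\ $f$ is periodic with period $4$ around the cycle. Viewing $f$ as constant on the cosets of $\langle 4\rangle$ in $\mathbb{Z}_n$, injectivity of the bijection forces every coset to be a singleton, hence $n\mid 4$, so $n=4$ for a genuine cycle. Conversely, the labeling $f(v_1)=1,\ f(v_2)=2,\ f(v_3)=4,\ f(v_4)=3$ gives all weights equal to $5$, so $C_4$ is $(5,0)$-distance antimagic; this matches $a=n+1$ from Lemma~\ref{ubd}.

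For $d=1$, Lemma~\ref{ubd} gives $a=\tfrac{n+3}{2}$, which is an integer only when $n$ is odd; this is the necessity of the oddness condition. For sufficiency I would use the following reduction. The weight $w(v_i)=f(v_{i-1})+f(v_{i+1})$ is the sum of the labels at the two vertices at distance $2$ through $v_i$, so it equals the edge-sum of the edge $\{v_{i-1},v_{i+1}\}$ of the auxiliary graph $H$ that joins each vertex to the two vertices at distance $2$. For $n$ odd, $\gcd(2,n)=1$ makes $H$ a single Hamiltonian cycle isomorphic to $C_n$, and $i\mapsto\{v_{i-1},v_{i+1}\}$ is a bijection from vertices to edges of $H$. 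Thus the problem becomes: place $1,\dots,n$ on the vertices of a cycle so that the consecutive edge-sums are the $n$ consecutive integers $\tfrac{n+3}{2},\dots,\tfrac{3n+1}{2}$. I would use the explicit arrangement $c_i=1+\big(\tfrac{n+1}{2}\,i \bmod n\big)$ for $i=0,\dots,n-1$, which interleaves $1,2,\dots,\tfrac{n+1}{2}$ at the even positions with $\tfrac{n+3}{2},\dots,n$ at the odd positions; transferring these labels back through the isomorphism $H\cong C_n$ defines $f$ on $C_n$.

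The main obstacle is this last construction: one must verify that the arrangement $c$ is indeed a bijection and that its cyclic edge-sums $c_i+c_{i+1}$ run over exactly $\{\tfrac{n+3}{2},\dots,\tfrac{3n+1}{2}\}$ with no repetitions, which is a direct but slightly delicate modular computation (the reductions $\bmod\ n$ must be tracked at the wrap-around). Everything else — the bound $d\le 1$, the integrality obstruction ruling out even $n$ for $d=1$, the period-$4$ argument for $d=0$, and the small labeling for $C_4$ — is short. I would close by noting that, since a difference-$1$ arithmetic progression of $n$ terms is automatically repetition-free, the constructed labeling is genuinely $(a,1)$-distance antimagic, completing both directions.
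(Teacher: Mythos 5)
Your proposal is correct, and it is genuinely more self-contained than what the paper does: the paper offers no proof of Theorem~\ref{Cnd1} at all, presenting it only as a corrected restatement of the Arumugam--Kamatchi characterization from \cite{AK12} (with the $n=4$, $d=0$ case supplied by the known distance magic labeling of $C_4$). Your route supplies all the missing pieces directly. The necessity half is handled exactly in the spirit of the paper's other characterizations (prisms, complete graphs): Lemma~\ref{ubd} with $r=2$ gives $d\le 2\frac{n-2}{n-1}<2$, and the integrality of $a=\frac{n+3}{2}$ kills even $n$ when $d=1$; your period-$4$ argument for $d=0$ is a clean replacement for the citation and correctly isolates $n=4$. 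For sufficiency at $d=1$ you give an explicit construction via the distance-$2$ auxiliary cycle, which I checked: with $m=\frac{n+1}{2}$ (the inverse of $2$ modulo $n$) the arrangement $c_i=1+(mi\bmod n)$ places $1,\dots,\frac{n+1}{2}$ at even positions and $\frac{n+3}{2},\dots,n$ at odd positions, and the cyclic edge-sums split into two interleaved arithmetic progressions of difference $2$ plus the wrap-around value $\frac{n+3}{2}$, together covering $\{\frac{n+3}{2},\dots,\frac{3n+1}{2}\}$ exactly once (e.g.\ for $n=5$ the arrangement $1,4,2,5,3$ gives sums $5,6,7,8,4$). What your approach buys is independence from \cite{AK12}; what it costs is the modular bookkeeping you flag, which does go through but should be written out, including the wrap-around edge, if this were to replace the citation.
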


\noindent The previous theorem showed that only odd cycles have $(a,d)$-distance antimagic labelings for $d\geq 1$. However in the next theorem we shall construct distance antimagic labelings for even cycles.

\begin{theorem}
All cycles are distance antimagic.
\label{Cn}
\end{theorem}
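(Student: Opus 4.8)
The plan is to treat odd and even cycles separately, because the odd case is already essentially done. For odd $n$, Theorem~\ref{Cnd1} provides an $(a,1)$-distance antimagic labeling, and since any $(a,d)$-distance antimagic labeling with $d>0$ has pairwise distinct vertex-weights it is in particular distance antimagic. Thus no odd cycle needs further work, and the entire force of the theorem is in producing, for each even $n=2m$, a bijection $f\colon V(C_n)\to\{1,\dots,n\}$ whose weights $w(v_i)=f(v_{i-1})+f(v_{i+1})$ (indices modulo $n$) are all distinct.

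For even $n=2m$ I would use the observation that each weight depends only on the two neighbours of a vertex, and that those neighbours always lie at positions of the opposite parity. Hence the weights of the even-indexed vertices are precisely the $m$ cyclically consecutive sums of the labels sitting at the odd positions, and symmetrically for the odd-indexed vertices. This decoupling suggests partitioning the labels by magnitude: place the small labels $\{1,\dots,m\}$ at the even positions and the large labels $\{m+1,\dots,2m\}$ at the odd positions. Every consecutive sum taken from the small block is at most $(m-1)+m=2m-1$, whereas every consecutive sum from the large block is at least $(m+1)+(m+2)=2m+3$; the two families of weights therefore occupy disjoint intervals and cannot coincide across the two parity classes.

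With cross-collisions eliminated for free, what remains is to order each block cyclically so that its own $m$ consecutive sums are pairwise distinct. I would record this as a small lemma: the numbers $1,2,\dots,m$ admit a cyclic arrangement with $m$ distinct consecutive sums, and likewise for $m+1,\dots,2m$ (the two orderings are translates of one another, so distinctness transfers). Laying a block out in increasing order makes the $m-1$ interior sums $3,5,\dots,2m-1$ strictly increasing, so the only possible clash is between the single wrap-around sum and an interior sum; transposing the last two entries of the block clears this clash. Assembling the two blocks then yields a distance antimagic labeling of $C_{2m}$.

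The main obstacle is exactly the wrap-around sum: in plain increasing order the closing sum equals $m+1$, which collides with an interior value when $m$ is even, so the arrangement must be tuned (via the above transposition) according to the parity of $m$, and the shortest blocks must be inspected by hand since there are too few terms for the monotonicity argument to separate the sums. Verifying the auxiliary lemma for every block length is the crux; once it is in hand, the parity split together with the disjoint-interval argument delivers the labeling for all even $n$, and with the odd case this proves that every cycle is distance antimagic.
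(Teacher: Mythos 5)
Your proposal is correct for every $n\neq 4$ and, at the level of the key idea, rests on the same construction principle as the paper: the paper's explicit piecewise labeling also places the small labels $\{1,\dots,k\}$ on one parity class of $C_{2k}$ and the large labels $\{k+1,\dots,2k\}$ on the other (e.g.\ for $n=8$ the odd positions receive $1,2,4,3$ and the even positions $5,7,8,6$). The difference is in the packaging. The paper simply writes down one formula (increasing along each parity class to the middle, then decreasing) and verifies distinctness by listing all the resulting weights case by case. You instead make the mechanism explicit: in an even cycle the weights decouple into the cyclic adjacent-pair sums of the two parity blocks; the small/large split puts the two families of sums into the disjoint intervals $[3,2m-1]$ and $[2m+3,4m-1]$, killing cross-collisions for free; and what remains is a clean, reusable lemma that $1,\dots,m$ admits a cyclic order with $m$ distinct adjacent sums (increasing order works when $m$ is odd, since the interior sums $3,5,\dots,2m-1$ are odd while the wrap-around sum $m+1$ is even; transposing the last two entries repairs the unique clash when $m$ is even, giving sums $3,5,\dots,2m-5,\,2m-2,\,2m-1,\,m$, which are pairwise distinct for $m\geq 4$). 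Your translation remark correctly transfers the lemma to the large block. This is a genuinely cleaner reduction than the paper's brute-force weight list, and the lemma itself is of independent use.

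One point you half-noticed but should state outright: the case $m=2$ is not merely ``too short to inspect'' --- it is genuinely impossible, and necessarily so. In $C_4$ the two odd-position vertices have the \emph{same} neighborhood $\{x_2,x_4\}$, so by Lemma~\ref{not-da} the graph $C_4$ is not distance antimagic at all, and your block lemma must fail for $m=2$ (the two ``cyclic adjacent pairs'' coincide as a set). This is in fact an error in Theorem~\ref{Cn} as stated: the paper's own labeling at $n=4$ assigns $f(x_1)=1$, $f(x_2)=3$, $f(x_3)=2$, $f(x_4)=4$ and yields $w(x_1)=w(x_3)=7$, contradicting its claimed weight list. So the correct statement is that all cycles other than $C_4$ are distance antimagic, and your argument, with the $n=4$ exclusion made explicit via Lemma~\ref{not-da}, proves exactly that.
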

\begin{proof} Consider a cycle $C_n$ of order $n$. For odd $n$, it is already $(a,d)$-distance antimagic by Theorem \ref{Cnd1}. For even $n=2k$, we define a vertex labeling $f$ as follow. Suppose that $V(C_n)=\{x_1,x_2,\cdots,x_n\}$ and $E(C_n)=\{x_nx_1, x_ix_{i+1}, i=1,2, \ldots n\}$.\\

\[f(x_i)=\left \{\begin{array}{cl}
  1, & \mbox{for}~ i =1, \\
  i-1, & \mbox{for odd}~ i, 3 \leq i \leq k+1, \\
  n+2-i, & \mbox{for odd}~ i, k+2 \leq i \leq n-1,  \\
  \frac{n}{2}-1+i, & \mbox{for even}~ i, 2 \leq i \leq k+1, \\
    \frac{3n}{2}+2-i, & \mbox{for even}~ i, k+2 \leq i \leq n.
  \end{array}
\right.
\]
Under the previous labeling, we obtain the following all distinct vertex-weights.
\[w(x_i)=\left \{\begin{array}{cl}
  n+3, & \mbox{for}~ i =1, \\
  n-2+2i, & \mbox{for odd}~ i, 3 \leq i \leq k, \\
  2n-1, & \mbox{for odd}~ i = k+1 \mbox{or}~ k+2,\\
  3n+4-2i, & \mbox{for odd}~ i, k+3 \leq i \leq n-1,\\
  3, & \mbox{for}~ i = 2, \\
  2i-2, & \mbox{for even}~ i, 4 \leq i \leq k, \\
  n-1 + \frac{1+i}{2}, & \mbox{for even}~ i = k+1 \mbox{or}~ k+2,\\
  2n+4-2i, & \mbox{for even}~ i, k+3 \leq i \leq n.\\
  \end{array}
\right.
\]
\end{proof}

\noindent Adding an edge to each vertex in a cycle results in a unicyclic graph called sun. While for cycle, $(a,d)$-distance antimagic labelings do not exist for even cycles; for suns, the labelings do not exist for all suns.

%================SUN===================================================================
\subsection{Sun}

\emph{A sun $S_n$} is a cycle on $n$ vertices with a leaf attached to each vertex on the cycle. Let the vertex set of sun $V(S_n)=\{x_1, \ldots, x_n, y_1, \ldots,y_n\},$ where $d(x_i)=3$ and $d(y_i)=1$.

\begin{theorem}
All suns are not $(a,d)$-distance antimagic.
\end{theorem}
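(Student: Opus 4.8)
The plan is to assume for contradiction that a sun $S_n$ (so $v=2n$, with $n\ge 3$) admits an $(a,d)$-distance antimagic labeling $f$, and then to trap $d$ between two incompatible bounds. First I would record the two weight formulas that drive everything: since each leaf $y_i$ is adjacent only to $x_i$, its weight is $w(y_i)=f(x_i)$, while $w(x_i)=f(x_{i-1})+f(x_{i+1})+f(y_i)$. The case $d=0$ is disposed of immediately, because a $\{1\}$-distance magic labeling would force all weights equal, yet the leaf weights $w(y_i)=f(x_i)$ are pairwise distinct as $f$ is a bijection; so $d=0$ is impossible already for $n\ge 2$. From here on I assume $d\ge 1$, so the $2n$ weights are exactly the $2n$ distinct terms of the progression $a,a+d,\dots,a+(2n-1)d$, partitioned into the $n$ cycle-vertex weights and the $n$ leaf weights.

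The lower bound on $d$ comes from a sum identity. Summing leaf weights gives $\sum_i w(y_i)=\sum_i f(x_i)$, while summing cycle-vertex weights gives $\sum_i w(x_i)=2\sum_i f(x_i)+\sum_i f(y_i)$, so their difference is the total label sum $\sum_i f(x_i)+\sum_i f(y_i)=1+2+\cdots+2n=n(2n+1)$. On the other hand, the cycle weights and the leaf weights form two complementary $n$-element subsets of the progression, so the difference of their sums cannot exceed the difference between the top $n$ and the bottom $n$ terms, which equals $n^2 d$. Hence $n(2n+1)\le n^2 d$, i.e. $d\ge 2+\tfrac1n$, forcing $d\ge 3$.

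The upper bound comes from confinement. The $n$ leaf weights equal the labels $f(x_1),\dots,f(x_n)$, hence are $n$ distinct terms of the progression all lying in $\{1,\dots,2n\}$. Since consecutive terms of the progression differ by $d$, these $n$ terms span at least $(n-1)d$, yet they sit inside an interval of length $2n-1$; thus $(n-1)d\le 2n-1$, giving $d\le 2+\tfrac1{n-1}$ and therefore $d\le 2$ for $n\ge 3$. This contradicts $d\ge 3$, and since $d=0$ was already excluded, no sun can be $(a,d)$-distance antimagic.

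Each estimate is short once set up, so the only place that takes real thought is choosing the right pair of quantities: seeing that the cycle-minus-leaf weight difference is simultaneously pinned to the fixed value $n(2n+1)$ and capped by $n^2 d$, and that the leaf weights are confined to $[1,2n]$. The two spots I would re-verify carefully are the extremal computation showing $n^2 d$ is the maximal sum-difference of complementary halves of the progression, and the integrality step converting $d\ge 2+\tfrac1n$ and $d\le 2+\tfrac1{n-1}$ into $d\ge 3$ and $d\le 2$, since those are the only places an off-by-one could slip in.
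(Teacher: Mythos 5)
Your proof is correct, and its core is genuinely different from the paper's. Both arguments open the same way: the observation $w(y_i)=f(x_i)$ confines the $n$ leaf weights to $\{1,\dots,2n\}$, which is exactly how the paper gets $d\le 2$ (your version, $(n-1)d\le 2n-1$, is the cleaner way to state that estimate). The paper then finishes by case analysis: $d=0$ is dismissed, $d=1$ is excluded by arguing that the labels of the $x_i$ must be $n$ consecutive integers and that the smallest cycle-vertex weight $3c+n+1$ then leaves a gap in the progression, and $d=2$ is excluded by a parity argument (the $x$-labels would all have one parity, forcing $w(x_i)$ and $w(y_i)$ to have opposite parities). You instead prove a uniform lower bound: summing the two weight classes gives $\sum w(x_i)-\sum w(y_i)=\sum f(x_i)+\sum f(y_i)=n(2n+1)$, while the difference of sums of complementary $n$-element halves of the progression is at most $n^2d$, forcing $d\ge 2+\tfrac1n$ and hence $d\ge 3$ --- which contradicts $d\le 2$ and disposes of all cases at once (indeed it even subsumes $d=0$). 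What your route buys is the elimination of the case analysis, and in particular of the paper's $d=1$ step, whose claim that the $x$-labels form a consecutive block is the least transparent part of the published argument; what the paper's route buys is that each individual case rests on a very elementary direct observation. Your two flagged verification points (the extremal value $n^2d$ and the integrality step) both check out.
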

\begin{proof} Consider a sun $S_n$ of order $2n$. Since $w(y_i)=f(x_i)$ then $1 \leq w(y_i) \leq 2n$ and so $d \leq \frac{2n-1+1}{n} = 2$. For $d=0$, it is obvious that a distance magic labeling does not exist. If $d=1$ then the labels of $x_i$s are $c,c+1,\ldots,c+n-1$ for $1 \leq c \leq n+1$. Thus the smallest possible weight of $x_i$ is $c+(c+1)+(c+n)=3c+n+1$ and so there is a gap in vertex-weights. If $d=2$ then the labels of $x_i$ are either $1,3,\ldots,2n-1$ or $2,4,\ldots,2n$. In both cases, there will be parity difference between $w(x_i)$ and $w(y_i)$.
\end{proof}

\noindent Although all suns are not $(a,d)$-distance antimagic, next we shall prove that they are otherwise distance antimagic.

\begin{theorem}
All suns are distance antimagic.
\end{theorem}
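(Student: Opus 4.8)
The plan is to construct an explicit distance antimagic labeling of the sun $S_n$, reusing as much as possible the machinery from the corresponding cycle result (Theorem~\ref{Cn}). The key structural observation is that, in a sun, each leaf $y_i$ has only one neighbor, so $w(y_i)=f(x_i)$; meanwhile each cycle-vertex $x_i$ has three neighbors, namely its two cycle-neighbors $x_{i-1},x_{i+1}$ together with its pendant $y_i$, so $w(x_i)=f(x_{i-1})+f(x_{i+1})+f(y_i)$. Thus the leaf-weights are simply a permutation of the labels assigned to the cycle vertices, and the whole problem decouples into two coordinated tasks: (i) make all $n$ cycle-vertex labels $f(x_i)$ distinct (automatic, since $f$ is a bijection) so that the $n$ leaf-weights are distinct, and (ii) make the $n$ cycle-vertex weights $w(x_i)$ distinct among themselves and disjoint from the set of leaf-weights $\{f(x_1),\ldots,f(x_n)\}$.

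First I would fix a labeling scheme that assigns the smaller labels to a convenient subset. A natural choice is to give the $n$ leaves the labels $\{1,2,\ldots,n\}$ and the $n$ cycle vertices the labels $\{n+1,\ldots,2n\}$, or the reverse; I would experiment with which assignment most cleanly separates the two weight-classes. With the cycle vertices carrying the large labels, every $w(x_i)=f(x_{i-1})+f(x_{i+1})+f(y_i)$ is a sum of two large labels plus one small label, which tends to push all cycle-vertex weights above the range of the leaf-weights $w(y_i)=f(x_i)\in\{n+1,\ldots,2n\}$; this would immediately guarantee disjointness between the two weight-classes and reduce the problem to step (ii) alone. Within step (ii), I would choose the cyclic arrangement of the large labels so that the consecutive pair-sums $f(x_{i-1})+f(x_{i+1})$ are all distinct, mirroring the ``zig-zag'' pattern used in the even-cycle construction of Theorem~\ref{Cn}, and then fine-tune the pendant labels $f(y_i)$ to break any residual ties.

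The main obstacle I anticipate is controlling collisions \emph{among} the cycle-vertex weights, since the $w(x_i)$ are second-neighbor pair-sums and such sums are prone to coincidences (a phenomenon already visible in why even cycles fail to be $(a,d)$-distance antimagic). I expect to handle this by splitting into the parity of $n$ and, as in the cycle proof, defining $f$ piecewise along two arcs of the cycle so that the pair-sums increase monotonically along each arc, with the pendant labels supplying a final perturbation that separates the two arcs. The bookkeeping will be a case analysis in $i$ very much like the displayed formulas in Theorem~\ref{Cn}, and verifying distinctness will amount to checking that the (small number of) interface cases at $i\approx 1$, $i\approx k$, and $i\approx n$ do not collide. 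Once an explicit formula is written, the verification is routine arithmetic; the creative difficulty lies entirely in choosing the arrangement so that no two of the $2n$ weights coincide.
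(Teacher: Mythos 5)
Your structural setup is exactly the one the paper uses: you observe that $w(y_i)=f(x_i)$ and $w(x_i)=f(x_{i-1})+f(x_{i+1})+f(y_i)$, you assign the labels $\{1,\dots,n\}$ to the leaves and $\{n+1,\dots,2n\}$ to the cycle vertices, and you correctly note that this forces every cycle-vertex weight (at least $2n+6$ or so) above every leaf-weight (at most $2n$), so the two weight classes are automatically disjoint and only the distinctness of the $w(x_i)$ among themselves remains. That much is sound and is precisely the paper's argument.

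However, your proposal stops at the point where the actual proof begins: you never exhibit a labeling, and you defer the choice of cyclic arrangement to future ``experimentation.'' Since the theorem is proved by explicit construction, that deferred step is the whole content, so as written this is a plan rather than a proof. It is worth noting that the difficulty you anticipate (collisions among second-neighbor pair-sums, zig-zag arrangements, a parity split on $n$) does not materialize: the naive sequential labeling $f(x_i)=n+i$, $f(y_i)=i$ already works, because the pendant contribution $f(y_i)=i$ breaks the near-constancy of the pair-sums and gives $w(x_i)=2n+3i$ for $2\le i\le n-1$, with $w(x_1)=3n+3$ and $w(x_n)=4n$ at the wrap-around. These are all distinct unless $3\mid n$, in which case exactly two collisions occur ($3n+3=2n+3i$ at $i=\frac{n}{3}+1$ and $4n=2n+3i$ at $i=\frac{2n}{3}$), and each is repaired by swapping one pair of adjacent leaf labels. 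So the approach you chose does succeed, but you would need to commit to a concrete labeling and carry out this (short) verification to have a complete proof.
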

\begin{proof} We define a vertex labeling $f$ of $S_n$ as follow.
\[f(x_i)=n+i \quad \mbox{for}\quad i=1,2,\ldots,n,\quad \mbox{and}\]
\[f(y_i)=i \quad \mbox{for}\quad i=1,2,\ldots,n,\]
and so
\[w(y_i)=f(x_i) = n+i \quad \mbox{for}\quad i=1,2,\ldots,n,\quad \mbox{and}\]
\[\begin{array}{ll}
w(x_{i})&= \left\{
    \begin{array}[c]{llll}%
    3n+3 & \mbox{for}\quad i=1,\\
    2n+3i & \mbox{for}\quad i= 2,3, \ldots, n-1,\\
    4n & \mbox{for}\quad i= n.
    \end{array}\right.\end{array}\]
\noindent When $n \neq 0 \mod 3$, all vertex-weights are distinct. Otherwise, $3n+3=2n+3i$ for $i=\frac{n}{3}+1$ and $4n=2n+3i$ for $i=\frac{2n}{3}$. In that case, we exchange the labels of $y_{\frac{n}{3}+1}$ with $y_{\frac{n}{3}}$ and $y_{\frac{2n}{3}}$ with $y_{\frac{2n}{3}+1}$ to obtain distinct weights for all vertices.
\end{proof}

\noindent We have studied the distance antimagic labelings for cycles, the 2-regular connected graphs, and next we will consider two families of regular connected graphs: prisms and complete graphs. Here we manage to characterize all $(a,d)$-distance antimagic prisms and complete graphs.

%================PRISM===================================================================
\subsection{Prism}

\noindent \emph{A prism $C_n \times P_2$} is a 3-regular graphs of order $2n$. Let $V(C_n \times P_2)=\{x_1, \ldots, x_n, y_1, \ldots, y_n\}$ and  $E(C_n \times P_2)=\{x_i y_i, i=1, \ldots, n\}.$\\

\noindent In \cite{AK12}, Arumugam and Kamatchi proved that prisms are $(a,1)$-distance antimagic.
\begin{theorem} \emph{\cite{AK12}} The prism $C_n \times K_2$ is $(n+2,1)$-distance antimagic.
\label{prism1}
\end{theorem}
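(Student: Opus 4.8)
The plan is to write down an explicit bijection $f:V\to\{1,\dots,2n\}$ and then read off the $2n$ vertex-weights directly, checking that they are $2n$ consecutive integers. Note first that essentially nothing about the additive constant needs to be guessed: since the prism is $3$-regular on $v=2n$ vertices, Lemma \ref{ubd} with $r=3$ and $d=1$ forces the initial term of any $(a,1)$-progression to be $a=\tfrac{3(2n+1)-(2n-1)}{2}=2n+2$ (so the ``$n+2$'' in the statement should read $2n+2$; already for $n=3$ the minimum weight is $8$, not $5$). Hence the entire content of the theorem is the antimagic part: producing a labeling whose weights run through $2n+2,2n+3,\dots,4n+1$ without repetition.

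The candidate I would try first is the oriented linear labeling $f(x_i)=i$ together with $f(y_i)=2n+1-i$, sending small labels one way around the inner cycle and large labels the other way around the outer cycle. Writing $w(x_i)=f(x_{i-1})+f(x_{i+1})+f(y_i)$ and $w(y_i)=f(y_{i-1})+f(y_{i+1})+f(x_i)$ with cycle indices modulo $n$, a one-line computation gives, for the non-wrapping indices, $w(x_i)=2n+1+i$ and $w(y_i)=4n+2-i$. Thus the inner-cycle weights want to sweep out the lower block $\{2n+2,\dots,3n+1\}$ and the outer-cycle weights the upper block $\{3n+2,\dots,4n+1\}$, so in the interior the two interleaved progressions tile a single run of $2n$ consecutive integers exactly.

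The crux is the seam: at the two wrap-around indices $i=1$ and $i=n$ these formulas break, and for the naive labeling above two of the four seam-weights spill one unit outside the target block, leaving the slots $2n+2$ and $4n+1$ empty. The key move is therefore to choose the offset on the second cycle so that the seams self-correct. Concretely I would keep $f(x_i)=i$ but set $f(y_1)=n+1$ and $f(y_i)=2n+2-i$ for $2\le i\le n$; this is still a bijection (the outer labels are exactly $\{n+1,\dots,2n\}$). A short computation then shows that the inner weights are $w(x_n)=2n+2$, $w(x_1)=2n+3$, and $w(x_i)=2n+2+i$ for $2\le i\le n-1$, which is precisely the block $\{2n+2,\dots,3n+1\}$; while the outer weights are $w(y_2)=3n+2$, $w(y_1)=3n+3$, $w(y_n)=3n+4$, and $w(y_i)=4n+4-i$ for $3\le i\le n-1$, which is precisely the block $\{3n+2,\dots,4n+1\}$. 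Together these give the full run $\{2n+2,\dots,4n+1\}$, an arithmetic progression of difference $1$.

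What remains is routine verification, and I expect no further obstacle: one checks that $f$ is a bijection, that the three special outer weights $3n+2,3n+3,3n+4$ do not collide with the regular ones (which start at $3n+5$), and that the small cases $n\le 4$ — where the ``regular'' index ranges are short or empty — behave as claimed, which is immediate by direct tabulation (I have verified $n=3,4,5$ by hand). The only genuinely delicate point is the choice of seam offset introduced above; once that single shift is fixed, distinctness and the arithmetic-progression property follow by inspection, exactly as the boundary bookkeeping governs the cycle and sun constructions earlier in the paper.
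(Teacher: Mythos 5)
Your proposal is correct, but note that the paper itself offers no proof of this statement at all: it is quoted verbatim from \cite{AK12}, so there is nothing internal to compare against and your explicit construction is genuinely additional content. I checked your labeling: with $f(x_i)=i$, $f(y_1)=n+1$, and $f(y_i)=2n+2-i$ for $2\le i\le n$, the label sets on the two cycles are $\{1,\dots,n\}$ and $\{n+1,\dots,2n\}$, the inner weights come out as $w(x_n)=2n+2$, $w(x_1)=2n+3$, $w(x_i)=2n+2+i$ for $2\le i\le n-1$, and the outer weights as $w(y_2)=3n+2$, $w(y_1)=3n+3$, $w(y_n)=3n+4$, $w(y_i)=4n+4-i$ for $3\le i\le n-1$; together these fill $\{2n+2,\dots,4n+1\}$ exactly, and the small cases $n=3,4$ (where the ``regular'' ranges degenerate) check out by direct computation. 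Your observation that the additive constant must be $2n+2$ rather than the stated $n+2$ is also right and worth flagging: Lemma \ref{ubd} with $r=3$, $v=2n$, $d=1$ forces $a=2n+2$, and for small $n$ the value $n+2$ is below the absolute minimum weight $1+2+3=6$, so the theorem as transcribed in the paper contains a typo (the source result in \cite{AK12} is the $(2n+2,1)$ statement). In short: your argument is complete and self-contained, whereas the paper relies entirely on the citation; the only caveat is that you are proving the corrected statement, not the literal one printed.
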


\noindent Next we will prove that $(a,d)$-distance antimagic prisms only exist when $d=1$.
\begin{theorem}
A prism is $(a,d)$-distance antimagic if and only if $d=1$.
\end{theorem}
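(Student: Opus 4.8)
The plan is to prove the statement in two directions. Since Theorem \ref{prism1} already establishes that every prism $C_n \times K_2$ admits a $(n+2,1)$-distance antimagic labeling, the ``if'' direction ($d=1$ implies the prism is $(a,d)$-distance antimagic) is immediate. The entire burden of proof therefore falls on the ``only if'' direction: I must show that no prism is $(a,d)$-distance antimagic for any $d \neq 1$. Because $d$ is a nonnegative integer by definition, this means ruling out $d=0$ and $d \geq 2$.

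First I would handle $d=0$. An $(a,0)$-distance antimagic labeling is precisely a distance magic labeling (as noted in the remark after the Definition), so all vertices would share a common weight. The prism is $3$-regular of order $2n$, so I would invoke the necessary condition from Lemma \ref{ubd}: a distance magic labeling would force $a = \frac{r(v+1)}{2} = \frac{3(2n+1)}{2}$, which is not an integer for any $n$. Hence $d=0$ is impossible. This is the quick part.

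For $d \geq 2$, the main tool is again Lemma \ref{ubd}, which bounds $d \leq r\frac{v-r}{v-1} = 3\cdot\frac{2n-3}{2n-1}$. Since $\frac{2n-3}{2n-1} < 1$, this gives $d < 3$, so the only remaining case is $d=2$. Thus the heart of the argument is to exclude $d=2$ specifically. Here I would adapt the parity idea used in the sun proof: Lemma \ref{ubd} pins down $a = \frac{r(v+1)-d(v-1)}{2} = \frac{3(2n+1)-2(2n-1)}{2} = n + \frac{5}{2}$, which is again non-integral, immediately ruling out $d=2$. This parity/integrality computation is the crux, and I expect it to be the cleanest route: rather than analyzing the combinatorial structure of the weights directly, the arithmetic constraints from the regularity of the prism alone suffice to eliminate both $d=0$ and $d=2$.

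The main obstacle to anticipate is whether the integrality argument from Lemma \ref{ubd} truly closes every case, or whether some value of $n$ slips through because the bound $d \leq 3\cdot\frac{2n-3}{2n-1}$ is not quite tight enough to force $d \leq 2$ for small $n$; I would check the smallest cases ($n=3,4$) by hand to confirm no sporadic exception arises. Should the integrality computation fail to exclude $d=2$ for some residue class of $n$, the fallback is the explicit parity argument: when $d=2$ the vertex-weights must all have a fixed parity pattern determined by the labels summed over each $3$-element neighborhood, and one shows this is incompatible with the labels $\{1,2,\ldots,2n\}$ being used exactly once, exactly as in the sun case. I am fairly confident, however, that the non-integrality of $a$ settles $d=2$ outright, making the whole ``only if'' direction a short arithmetic verification built on Lemma \ref{ubd}.
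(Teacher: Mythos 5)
Your proposal is correct and follows essentially the same route as the paper: Lemma \ref{ubd} bounds $d\leq 2$, the non-integrality of $a=\frac{3(2n+1)-d(2n-1)}{2}$ rules out $d=0$ and $d=2$, and Theorem \ref{prism1} supplies the labeling for $d=1$. The fallback parity analysis you sketch is unnecessary, exactly as you suspected.
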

\begin{proof} By Lemma \ref{ubd}, $d \leq 3 \frac{2n-3}{2n-1} \leq 2$. If $d=0$ or $d=2$, then $a=\frac{3(2n+1)-d(2n-1)}{2}$ is not an integer. By Theorem \ref{prism1}, we have the desired labeling for $d=1$.\end{proof}

%we define a vertex labeling $f$ as follow.
%\[f(x_i)=i \quad \mbox{for}\quad i= 1,2,\cdots,n,\quad \mbox{and}\]
%\[f(y_i)=\left \{\begin{array}{cl}
%  n+1, & \mbox{for}~ i =1, \\
%  2n+2-i, & \mbox{for}~ i =2,3,4,\cdots,n. \\
%  \end{array}
%\right.
%\]
%Under the above labeling, the vertex-weights of vertices are
%\[w(x_i)=\left \{\begin{array}{cl}
%  2n+i, & \mbox{for}~ i =1,2,\cdots,n-1. \\
%  2n+2, & \mbox{for}~ i =n,\quad \mbox{and}
%  \end{array}
%\right.
%\]
%\[w(y_i)=\left \{\begin{array}{cl}
%  3n+4-i, & \mbox{for}~ i =1,2, \\
%  4n+4-i, & \mbox{for}~ i =3,4,\cdots,n.
%  \end{array}
%\right.
%\]
%It is obvious that the weights form an arithmetic progression with difference 1.

%\begin{figure}[!h]
  % Requires \usepackage{graphicx}
 %\begin{center}
 %\includegraphics[width=8cm]{c12.png}
 %\caption{$(26,1)-$distance antimagic labeling of $C_{12} \times P_2.$}
 %\label{c6-1} \end{center}
 %\end{figure}

%================COMPLETE GRAPH===================================================================
\subsection{Complete Graph}

\begin{theorem}
A nontrivial complete graph has an $(a,d)$-distance antimagic labeling if and only if $d=1$.
\label{Kn}
\end{theorem}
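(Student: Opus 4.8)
The plan is to exploit the fact that in a complete graph every vertex is adjacent to all the others, so the neighborhood sums are completely determined by a single global quantity. Concretely, I would first observe that for $K_n$ with $n \geq 2$ and any bijection $f : V \to \{1,2,\ldots,n\}$, the neighborhood of a vertex $x$ is $N(x) = V \setminus \{x\}$, whence
\[
w(x) = \sum_{y \neq x} f(y) = \left(\sum_{z \in V} f(z)\right) - f(x) = \frac{n(n+1)}{2} - f(x).
\]
This single identity does almost all the work: as $x$ ranges over $V$ and $f(x)$ ranges bijectively over $\{1,\ldots,n\}$, the multiset of vertex-weights is exactly $\left\{\frac{n(n+1)}{2} - i : 1 \leq i \leq n\right\}$, that is, the $n$ consecutive integers from $\frac{n(n-1)}{2}$ up to $\frac{n(n+1)}{2}-1$.

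For the sufficiency ($d=1$) direction, I would simply note that the set above is an arithmetic progression with common difference $1$ and starting value $a = \frac{n(n-1)}{2}$; hence \emph{any} bijection $f$ is already an $(a,1)$-distance antimagic labeling, and no special construction is required.

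For the necessity direction, I would argue that the weight set is forced to be that set of $n$ consecutive integers regardless of the chosen labeling, and then check which values of $d$ are compatible with an $n$-term arithmetic progression equal to it. Since the weights $\frac{n(n+1)}{2}-f(x)$ are pairwise distinct for a bijection, the case $d=0$ (distance magic) is impossible for $n \geq 2$; and for $d \geq 2$ the progression $a, a+d, \ldots, a+(n-1)d$ spans an interval of length $(n-1)d > n-1$, which is too wide to coincide with $n$ consecutive integers once $n \geq 2$. Therefore $d=1$ is the only possibility.

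Honestly there is no serious obstacle here: the entire content is the neighborhood-sum identity, after which everything reduces to the elementary observation that a set of $n$ consecutive integers forms an $n$-term arithmetic progression only when the common difference equals $1$. The only point deserving a word of care is the nontriviality hypothesis $n \geq 2$, which is precisely what guarantees the forced weights are genuinely distinct and what rules out the degenerate $d=0$ case.
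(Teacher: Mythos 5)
Your proof is correct, and its necessity direction takes a genuinely different and more self-contained route than the paper's. The paper deduces $d\leq 1$ from its general bound for $r$-regular graphs (Lemma \ref{ubd} with $r=n-1$) and then excludes $d=0$ by citing the known non-existence of distance magic labelings of nontrivial complete graphs, before exhibiting the labeling $f(x_i)=i$. You instead observe that for \emph{any} bijection $f$ the identity $w(x)=\frac{n(n+1)}{2}-f(x)$ forces the weight set to be the $n$ consecutive integers from $\frac{n(n-1)}{2}$ to $\frac{n(n+1)}{2}-1$; this single computation simultaneously rules out $d=0$ (the weights are pairwise distinct), rules out $d\geq 2$ (the span of the progression would be $(n-1)d>n-1$), and proves sufficiency (every labeling is already $(a,1)$-antimagic). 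Your argument buys independence from Lemma \ref{ubd} and from the external reference, and it yields the stronger statement that the weight spectrum of $K_n$ is labeling-independent; the paper's argument buys a worked illustration of how its general regular-graph machinery applies to a concrete family. The construction step is essentially identical in both.
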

\begin{proof} Consider a complete graph of order $n$, $K_n$. Since $K_n$ is $(n-1)$-regular, then by Lemma \ref{ubd}, $d \leq 1$. It is known that distance magic labelings do not exist for nontrivial complete graphs (see for example \cite{MRS03}), and so $d=1$. Again, by applying Lemma \ref{ubd}, we obtain $a=\frac{(n-1)n}{2}$.\\

\noindent Suppose that $V(K_n)=\{x_1, x_2, \ldots ,x_n\}$. We then define a vertex labeling of $K_n$ as follow.
\[f(x_i)=i \quad \mbox{for}\quad i= 1,2,\ldots,n.\]
Under the labeling $f$, the vertex-weights are
\[w(x_i)=\frac{n(n+1)}{2}-i \quad \mbox{for}\quad i= 1,2,\cdots,n,\]
which constitute an arithmetic progression with difference 1.
\end{proof}

\noindent The last families of graphs to be considered are wheels, fans, and friendship graphs. They are closely related since deleting one edge in a wheel results in a fan and deleting half of the edges results in a friendship graph. Not surprisingly, the graphs have similar distance antimagicness characteristics. We prove that all three graphs are not $(a,d)$-distance antimagic in general, but distance antimagic instead.

\begin{figure}[h]
\centerline{\includegraphics[width=0.9\textwidth]{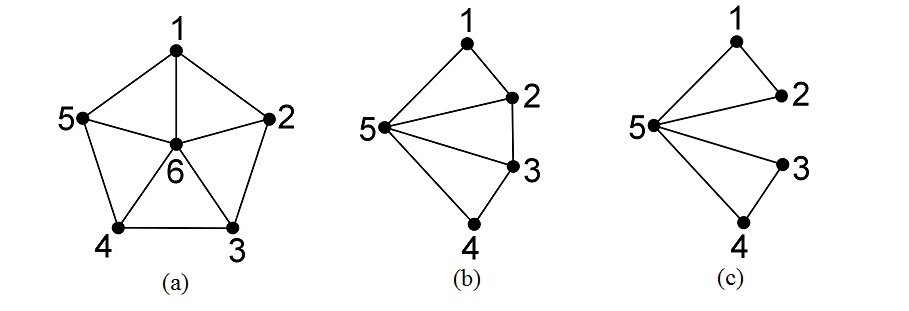}}
\caption{\small $(a,d)$-distance antimagic labelings for wheel-related graphs.}
\protect\label{figure}
\end{figure}

%================WHEEL===================================================================
\subsection{Wheel}

\noindent \emph{A wheel $W_n$} is a graph obtained by joining all vertices of a cycle of order $n$ to a further vertex called the
\emph{center}. Let $V(W_n)=\{x_0,x_1, \ldots, x_n\}$ where $v_0$ is the center and $x_1, \ldots, x_n$ are the vertices of the cycle.

\begin{lemma} A wheel $W_n$ of order $n+1$ has an $(a,d)-$distance antimagic labeling if and only if $3 \leq n \leq 5$.
\end{lemma}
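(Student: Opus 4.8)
The plan is to treat sufficiency and necessity separately, with essentially all the work in ruling out $n\ge 6$. For sufficiency, observe that $W_3\cong K_4$, which is $(a,1)$-distance antimagic by Theorem \ref{Kn}; for $W_4$ and $W_5$ I would exhibit the explicit labelings displayed in Figure \ref{figure} and check directly that the five (respectively six) weights form an arithmetic progression. These are finite verifications.

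For necessity, write $c=f(x_0)$ and $S=\tfrac{(n+1)(n+2)}{2}=\sum_{i=0}^n f(x_i)$, so that the center weight is $w(x_0)=S-c$ and each rim weight is $w(x_i)=c+f(x_{i-1})+f(x_{i+1})$. The first key step is to show that for $n\ge 6$ the center carries the strictly largest weight: since the two largest labels other than $c$ sum to at most $2n+1$, one has $w(x_0)-w(x_i)\ge S-2c-(\ell_1+\ell_2)$, and a short case check on the admissible values of $c$ shows the minimum of this quantity equals $\tfrac{n(n-5)}{2}>0$. In particular $d=0$ is impossible (so $W_n$ is not distance magic), and in any $(a,d)$-labeling the center must realize the top term $a+nd$, forcing the $n$ rim weights to be exactly $\{a,a+d,\dots,a+(n-1)d\}$.

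The second step turns this into two conflicting bounds on $d$. The rim pair-sums $s_i=f(x_{i-1})+f(x_{i+1})$ now form an arithmetic progression of length $n$ and common difference $d$. Since each label appears in exactly two of them, $\sum_i s_i=2\sum_{i\ge 1}f(x_i)=2(S-c)$; combining this with $w(x_0)=S-c=a+nd$ and eliminating $a$ yields the single relation $(n-2)S-2(n-1)c=d\,\tfrac{n(n+1)}{2}$, that is $d=\tfrac{(n-2)(n+1)(n+2)-4(n-1)c}{n(n+1)}$. Because $c\le n+1$, the numerator is at least $(n+1)n(n-4)$, so $d\ge n-4$. On the other hand, each $s_i$ is a sum of two distinct labels from $\{1,\dots,n+1\}$, hence all $s_i$ lie in an interval of length at most $(2n+1)-3=2n-2$; as they span a progression of length $n$ with difference $d$, we get $(n-1)d\le 2n-2$, i.e. $d\le 2$.

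Combining $n-4\le d\le 2$ gives $n\le 6$, which disposes of every $n\ge 7$. The remaining case $n=6$ is the one delicate endpoint: there the relation forces $c=7$ and $d=2$, and I would finish by the sharper observation that with $c=7$ the rim labels are exactly $\{1,\dots,6\}$, so $s_i\in[3,11]$ spans an interval of length only $8$, contradicting the required spread $(n-1)d=10$. The main obstacle is the first step — establishing that the center strictly dominates for \emph{every} admissible $c$ — since the elimination giving $d\ge n-4$ and the entire reduction to the rim progression both rely on the center occupying the extreme term of the progression; note that the bound $\tfrac{n(n-5)}{2}>0$ degenerates precisely at $n=5$, which is consistent with $W_5$ still being antimagic.
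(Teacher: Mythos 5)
Your proof is correct, but your necessity argument takes a noticeably longer route than the paper's. The paper gets $d\le 2$ from $a+(n-1)d\le\max_i w(x_i)\le 3n$ together with $a\ge 6$, and then simply observes that $w(x_0)\ge\frac{n(n+1)}{2}$ while every rim weight is at most $3n$; since the center must therefore occupy the top term $a+nd$ and the next term $a+(n-1)d$ is a rim weight, their difference is exactly $d$, giving $d\ge\frac{n(n+1)}{2}-3n=\frac{n(n-5)}{2}\ge 3$ for all $n\ge 6$ at once --- no counting identity and no special case at $n=6$. Your own first step already contains this shortcut: once you know $w(x_0)-w(x_i)\ge\frac{n(n-5)}{2}$ for every $i$ and that the center realizes $a+nd$, you can read off $d=w(x_0)-\max_i w(x_i)\ge\frac{n(n-5)}{2}$ directly and skip the double-counting relation, the weaker bound $d\ge n-4$, and the delicate $n=6$ endpoint entirely. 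What your longer route buys is the exact identity $(n-2)S-2(n-1)c=d\,\frac{n(n+1)}{2}$, which pins down $f(x_0)$ and $d$ simultaneously (e.g.\ forcing $c=7$, $d=2$ at $n=6$) and could be useful for finer classification, but it is not needed for this lemma; I did verify that your case check on $c$, the elimination of $a$, and the $n=6$ contradiction are all arithmetically sound. Two small points on sufficiency: Figure~\ref{figure}(a) displays only the $W_5$ labeling, and for $W_4$ the only admissible value is $d=0$ (by Lemma~\ref{not-da}, since $x_1$ and $x_3$ share a neighborhood), so your ``check that the five weights form an arithmetic progression'' must be read as exhibiting the known distance magic, i.e.\ $(10,0)$-distance antimagic, labeling, which is what the paper does.
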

\begin{proof}
\noindent Since $d(x_i)=3$ then $6 \leq w(x_i) \leq 3n$, for $i=2,\cdots, n-1$. Thus, $a+(n-1)d \leq 3n$ or $d \leq 3 - \frac{3}{n-1}$, and so $d \leq 2$. On the other hand,
$\frac{n(n+1)}{2} \leq w(x_0) \leq \frac{n(n+3)}{2}$. This leads to $w(x_0) - w(x_i) \geq \frac{n(n+1)}{2} - 3n$ for some $i$ and so $d \geq \frac{n^2-5n}{2}$. For $n \geq 6$, we obtain $d \geq 3$, a contradiction.\\

\noindent Now we need to consider $W_n$ for $n=3,4,5$. For $n=3$, since $W_3 \simeq K_4$, we have the desired labeling as in Theorem \ref{Kn}. For $n=4$, it is known that $W_4$ is distance magic or $(10,0)$-distance antimagic (see \cite{MRS03}). For $d>0$, by Lemma \ref{not-da}, $W_4$ is not $(a,d)$-distance antimagic. To complete the proof, for $n=5$, consider a vertex labeling of $W_5$ whose vertex-weights constitute an arithmetic progression with difference 1 as depicted in Figure \ref{figure}(a).
\end{proof}

%\[f(x_i)=\left \{\begin{array}{cl}
%  n+1, & \mbox{for}~ i = 0, \\
%  i, & \mbox{elsewhere}.
%  \end{array}
%\right.
%\]
%The corresponding vertex-weights then constitute an arithmetic progression with
%difference 1 as described in the following.
%\[w(x_i)=\left \{\begin{array}{cl}
%  15, & \mbox{for}~ i =0, \\
%  13, & \mbox{for}~ i =1, \\
%  6+2i, & \mbox{for}~ i =2,3,4, \\
%  11, & \mbox{for}~ i =5.
%  \end{array}
%\right.
%\]
\noindent Although only two small wheels are $(a,d)$-distance antimagic for $d\geq 1$, we could construct distance antimagic labelings for all wheels of order other than 5.
\begin{lemma}
All wheels of order other than 5 are distance antimagic.
\label{Wn}
\end{lemma}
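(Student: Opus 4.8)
The plan is to decouple the center from the rim. Writing $x_0$ for the center and $x_1,\ldots,x_n$ for the rim vertices (indices taken modulo $n$), the center weight is $w(x_0)=\sum_{i=1}^n f(x_i)$, while a rim vertex has weight $w(x_i)=f(x_0)+f(x_{i-1})+f(x_{i+1})$. The crucial observation is that the term $f(x_0)$ is common to every rim weight, so the rim weights are pairwise distinct precisely when the neighbour-sums $f(x_{i-1})+f(x_{i+1})$ are pairwise distinct. But that is exactly the requirement that the restriction of $f$ to the rim be a distance antimagic labeling of the cycle $C_n$. Thus the problem reduces to placing a distance antimagic labeling of $C_n$ on the rim and then checking that the center weight does not clash with any rim weight.

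First I would fix the center label $f(x_0)=1$ and distribute the remaining labels $\{2,\ldots,n+1\}$ on the rim. For $n\neq 4$ the cycle $C_n$ admits a distance antimagic labeling by Theorem \ref{Cn}; adding $1$ to each of its labels yields a bijection of the rim onto $\{2,\ldots,n+1\}$ whose neighbour-sums differ from the original ones by the constant $2$ and therefore remain pairwise distinct. Consequently all rim weights $w(x_i)$ are distinct, and the only thing left to verify is that $w(x_0)$ is different from each of them.

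With $f(x_0)=1$ the center weight equals $2+3+\cdots+(n+1)=\frac{n(n+3)}{2}$, whereas every rim weight is at most $1+n+(n+1)=2n+2$. The inequality $\frac{n(n+3)}{2}>2n+2$ is equivalent to $n^2-n-4>0$, which holds for all integers $n\geq 3$; hence the center weight strictly exceeds every rim weight and no collision can occur. I expect this bounding step, though it is only a one-line degree count followed by a quadratic inequality, to be the one place where the order of the wheel matters, and choosing the center label to be the smallest value $1$ is precisely what makes the center weight dominate for every $n\geq 3$.

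Finally I would account for the excluded order $5$, i.e.\ $n=4$. In $W_4$ the two antipodal rim vertices $x_1$ and $x_3$ share the common neighbourhood $\{x_0,x_2,x_4\}$, so $W_4$ contains two vertices with identical neighbourhoods and fails to be distance antimagic by Lemma \ref{not-da}; this is exactly the obstruction that the reduction above encounters, since it is this case in which the rim cycle cannot be labelled distance antimagically. The genuine content of the argument is therefore concentrated in the reduction observation together with the availability of the even-cycle constructions from Theorem \ref{Cn}; once these are in hand, the remainder is the short degree count and inequality above.
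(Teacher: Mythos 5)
Your proof is correct and follows essentially the same route as the paper: transfer the distance antimagic labeling of $C_n$ from Theorem \ref{Cn} to the rim, give the leftover label to the hub, and observe that adding the constant hub label to every rim weight preserves their distinctness. The only difference is that you place the label $1$ on the hub (the paper uses $n+1$), which lets you verify explicitly via $\frac{n(n+3)}{2} > 2n+2$ for $n \geq 3$ that the hub weight cannot collide with any rim weight --- a check the paper's proof leaves implicit.
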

\begin{proof}
By Lemma \ref{not-da}, $W_4$ is not distance antimagic. For $n \neq 4$, define a vertex labeling where for $i=1, \ldots, n$, $x_i$ is labeled as vertex $x_i$ of a cycle $C_n$ in the proof of Theorem \ref{Cn} and $v_0$ is labeled with $n+1$. Since the vertex-weights of vertices in the cycle are distinct, then the vertex-weights of vertices in the wheel are also distinct.
\end{proof}

%=========================FAN==============================================

\subsection{Fan}
\emph{A fan $F_n$} is a graph obtained by joining all vertices of a path of order $n$ to a further vertex called the
\emph{center}. Let $V(F_n)=\{x_0,x_1, \ldots, x_n\}$ where $x_0$ is the center and $x_1, \ldots, x_n$ are the vertices of the path.

\begin{theorem}
The fan $F_{n}$ admits an $(a,d)-$distance antimagic labeling if and only if $n = 2$ or $n = 4$.
\end{theorem}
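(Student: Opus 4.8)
The plan is to prove both directions by first recording the degree sequence of $F_n$ and then separating a general counting argument from a short list of small cases (throughout I take the path to have at least two vertices, so $n \ge 2$).

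First I would record the structure. In $F_n$ the center $x_0$ has degree $n$, the two path-endpoints $x_1,x_n$ have degree $2$, and the $n-2$ interior path-vertices $x_2,\ldots,x_{n-1}$ have degree $3$; consequently every weight except $w(x_0)$ contains the label $c:=f(x_0)$ as a summand. Writing $S=\frac{(n+1)(n+2)}{2}$ for the sum of all labels, we have $w(x_0)=S-c$, while every non-center weight is at most $3n$ (the sum $(n+1)+n+(n-1)$ of the three largest labels) and at least $c+1$. These three estimates drive the whole forward direction.

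For the nonexistence direction I would dispose of the cases as follows. For $n=3$ the endpoints satisfy $N(x_1)=N(x_3)=\{x_0,x_2\}$, so by Lemma \ref{not-da} the graph is not even distance antimagic. For $n\ge 6$ the estimate $w(x_0)=S-c\ge \frac{n(n+1)}{2}>3n$ forces the center to be the unique maximum weight, hence $a+nd=w(x_0)$; combining the gap inequality $a+(n-1)d\le 3n$ (which forces $d$ to be large) with $a\ge c+1$ (which forces $d$ to be small) gives $\big(S-c-3n\big)n\le S-2c-1$, and optimizing over $c\in\{1,\ldots,n+1\}$ reduces this to the cubic inequality $n^3-6n^2+n+4\le 0$, which is violated for every $n\ge 6$ — a contradiction. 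The genuinely delicate case is $n=5$, where this counting only narrows $c=f(x_0)$ to $\{4,5,6\}$; here I would run a short finite case analysis, in each case using that the neighbors $x_2,x_{n-1}$ of the endpoints are forced to carry small labels, so that one obtains either a weight collision (typically at the middle vertex $x_3$, where $w(x_3)=c+f(x_2)+f(x_{n-1})$ meets the center weight) or a value lying outside the prescribed progression.

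For the existence direction the constructions are short: $F_2\cong K_3$ is $(a,1)$-distance antimagic by Theorem \ref{Kn}, while for $F_4$ the labeling $f(x_0)=5$ and $f(x_i)=i$ for $1\le i\le 4$ gives weights $7,8,9,10,11$, an arithmetic progression with $d=1$ (see Figure \ref{figure}). The main obstacle is the boundary case $n=5$: it sits exactly where the clean counting bound just fails to close, so it cannot be absorbed into the general argument and must be eliminated by the explicit finite check above.
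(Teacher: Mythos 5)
Your overall architecture coincides with the paper's: bound the discrepancy between the center weight $w(x_0)=S-c$ and the rim weights (all $\le 3n$) to kill $n\ge 6$, dispose of $n=3$ via Lemma \ref{not-da}, and exhibit explicit labelings for $n=2$ and $n=4$. Your $n\ge 6$ computation is correct and in fact tidier than the paper's: from $a+nd=S-c$, $a+(n-1)d\le 3n$ and $a\ge c+1$ one does get $n(S-c-3n)\le S-2c-1$, which at the most favorable value $c=n+1$ becomes $n^3-6n^2+n+4\le 0$, false for $n\ge 6$. (The paper instead derives $d\le 2$ and $d\ge \frac{n^2-5n}{2}\ge 3$; same idea, different bookkeeping.) Your explicit $F_4$ labeling checks out: the weights are $7,8,9,10,11$.

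The genuine gap is $n=5$, which you correctly identify as the crux but do not actually close. Narrowing $c=f(x_0)$ to $\{4,5,6\}$ is fine, but the elimination you sketch --- ``the neighbors $x_2,x_{n-1}$ of the endpoints are forced to carry small labels, so one obtains a weight collision at the middle vertex'' --- is not how any of these cases actually dies, and as stated it is not an argument. What is really needed (and what the paper does) is: for $c=4$ one has $w(x_0)=17$ while every rim weight is at most $15$, so $d\ne 1$ and hence $d=2$; then all six weights $7,9,\dots,17$ are odd, forcing $f(x_2)$ and $f(x_4)$ to be odd via $w(x_1)=4+f(x_2)$ and $w(x_5)=4+f(x_4)$, whence $w(x_3)=4+f(x_2)+f(x_4)$ is even --- a parity contradiction. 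For $c=5$ one has $w(x_0)=16$; if $d=1$ the endpoint weights $5+f(x_2)$ and $5+f(x_4)$ must both lie in $\{11,\dots,15\}$, forcing both to equal $11$, impossible; if $d=2$ the same parity trick at $x_3$ applies. The case $c=6$ also needs its own argument (here $w(x_0)=15=3n$, so your ``center is the unique maximum'' step does not even apply): one checks $a+5d=15$ with $a\ge 7$ forces $d\le 1$, and a weight-sum count $\sum_x w(x)=75-\bigl(f(x_1)+f(x_5)\bigr)$ rules out both $d=0$ and $d=1$. None of this is routine enough to be waved through as ``a short finite case analysis''; without it the forward direction is unproved precisely at the boundary case your own counting fails to reach.
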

\begin{proof} Since $d(x_1)=d(x_n)=2$ and $d(x_i)=3$ for $i=2,\cdots, n-1$ then $3 \leq w(x_i) \leq 3n$. Thus, $d \leq \frac{3n-3}{n}$, and so $d \leq 2$. On the other hand, $\frac{n(n+1)}{2} \leq w(x_0) \leq \frac{n(n+3)}{2}$. This leads to $w(x_0) - w(x_i) \geq \frac{n(n+1)}{2} - 3n$ for some $i$ and so $d \geq \frac{n^2-5n}{2}$. For $n \geq 6$, $d \geq 3$, a contradiction.

\noindent Now we need to consider $F_n$ with $2 \leq n \leq 5$. For $n=2$, since $F_2 \simeq K_3$, we have the desired labeling as in Theorem \ref{Kn}. For $n=3$, $F_3$ has no $(a,d)$-distance antimagic labeling by Lemma \ref{not-Dda}. For $n=4$ consider a vertex labeling of $F_4$ whose vertex-weights constitute an arithmetic progression with difference 1 as depicted in Figure \ref{figure}(b). For $n=5$, if we assign $1, 2, 3$ or $6$ as the label of vertex $x_0$ then the difference between the weight of $x_0$ and the largest weight of $x_i, i=1, \ldots, n$ is greater than 2, a contradiction. If we assign $4$ as the label of $x_0$ then $w(x_0)=17$. Due to the impossibility to attain $16$ as weight, we have $d=2$ and so the weights of all $x_i$s are odd. This implies that the labels of $x_2$ and $x_4$ must be odd, causing the weight of $x_3$ to be even, a contradiction. If we assign $5$ as the label of $x_0$ then $w(x_0)=16$. If $d=1$ then $w(x_i) \geq 11$, for $i=1,2,\ldots,5$. However, the weights of $x_1$ and $x_5$ are summation of two labels, one of which is $5$, and so the weight $12$ is not achievable. If $d=2$ then the weights of all $x_i$s are even; thus the labels of $x_2$ and $x_4$ must be odd and the weight of $x_3$ is also odd, a contradiction.
\end{proof}

%Now, we prove that $F_5$ and $F_6$ have no
%$(a,d)-$distance antimagic labelling for every $d.$ Suppose that
%$F_5$ has $(a,d)-$distance antimagic labelling. Then the vertex
%$v_0$ can be given label $1,2,3,4,5$ or $6.$ If $f(v_0)=1$ then
%$w(v_0)=20.$ For $i=1,2,3,4,5$ we have $ 3 \leq w(v_i) \leq 12.$
%Thus, the maximal difference $d$ on $v_i$ can be 1 or 2. On the
%other hand, difference of the largest label on $v_i$ and $v_0$ is
%$w(v_0)-w(w_i) \geq 8.$ This is impossible to give label such that
%the weight of vertices form an arithmetic progression with
%difference 1 or 2. The similar argument, can be used to prove when
%$f(v_0) \neg 1.$ Analogue with $F_5,$ we can prove that $F_6$ has
%no $(a,d)-$distance antimagic labelling for every $d.$
%
%\[f(v_i)=\left \{\begin{array}{cl}
%  n+1, & \mbox{for}~ i =0, \\
%  i, & \mbox{for else}.
%  \end{array}
%\right.
%\]
%Thus we obtain a vertex-weights set $\{10, 7, 9, 11, 8\}$ form an arithmetic
%progression with difference 1.

\noindent Again, we could prove that all fans, except for $F_3$, are distance antimagic.
\begin{lemma}
All fans of order other than 4 are distance antimagic.
\end{lemma}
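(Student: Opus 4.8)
The plan is to treat the path $x_1,\dots,x_n$ and the center $x_0$ separately. Write $s_i$ for the sum of the labels of the neighbours of $x_i$ \emph{along the path}, so that $s_1=f(x_2)$, $s_n=f(x_{n-1})$, and $s_i=f(x_{i-1})+f(x_{i+1})$ for $2\le i\le n-1$. Then every path vertex has weight $w(x_i)=f(x_0)+s_i$, while the center has weight $w(x_0)=\sum_{j=1}^{n} f(x_j)$. Since adding the constant $f(x_0)$ to all path vertices preserves distinctness, the labeling is distance antimagic precisely when (i) the $s_i$ are pairwise distinct and (ii) $w(x_0)$ avoids all of the values $f(x_0)+s_i$.

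I would dispose of (ii) by forcing $w(x_0)$ to be extremal. If the center is given the smallest label, $f(x_0)=1$, then $w(x_0)=\binom{n+2}{2}-1$ grows quadratically, whereas every path weight is at most $1+n+(n+1)=2n+2$; a one-line inequality shows that the center weight is strictly the largest for all $n\ge 3$, so (ii) becomes automatic. Dually, giving the center the largest label $n+1$ makes $w(x_0)=\binom{n+1}{2}$ dominate the path weights once $n\ge 6$, the two exceptions $n=2,4$ being settled separately. Either way, this reduces everything to condition (i), i.e.\ to producing a distance antimagic labeling of the path $P_n$.

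The core of the argument is thus (i), and my approach is to label the path consecutively and let the parity of $n$ choose which label the center steals. Taking $f(x_i)=i$ turns the interior sums into the arithmetic progression $s_i=2i$ of even numbers, so the interior vertices separate automatically; the only danger is that an \emph{endpoint} sum, being a single label, lands inside this progression. Concretely $s_1$ is too small to collide, while $s_n=f(x_{n-1})$ collides with an interior value exactly when it shares the interior parity, which is governed solely by $n \bmod 2$. Shifting the whole path up by one (equivalently, handing the label $1$ to the center and letting the path carry $\{2,\dots,n+1\}$) reverses the parity that succeeds, so $f(x_i)=i$ with $f(x_0)=n+1$ handles the even fans and $f(x_i)=i+1$ with $f(x_0)=1$ handles the odd fans. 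I expect this endpoint-versus-interior collision to be the one real obstacle: the interior weights take care of themselves, but each endpoint contributes only a single label and must be steered clear of the interior progression.

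It remains to clean up the small fans not covered by the size estimate. The fans $F_2$ and $F_4$ are $(a,1)$-distance antimagic by the preceding theorem, hence distance antimagic; $F_3$ is genuinely excluded, since $x_1$ and $x_3$ then share the neighbourhood $\{x_0,x_2\}$ and Lemma \ref{not-da} applies; and $F_5$, which admits no $(a,d)$-distance antimagic labeling, falls under the odd-$n$ construction, its center label $1$ already making $w(x_0)$ dominant. Together with the two consecutive schemes this exhausts every $n\neq 3$, establishing the claim.
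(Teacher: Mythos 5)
Your proof is correct, and it rests on the same basic strategy as the paper's --- give the path consecutive labels so that the interior neighbour-sums form an arithmetic progression of a single parity, then handle only the two endpoints and the center --- but the explicit construction is genuinely different. The paper uses one labeling for all $n$: the center takes the \emph{middle} label $\lceil\frac{n+2}{2}\rceil$ and the path labels run $1,\dots,\lfloor\frac{n+1}{2}\rfloor,\ \lfloor\frac{n+1}{2}\rfloor+2,\dots,n+1$, the gap being positioned so that the endpoint sums cannot land inside the interior progression; the cost is a longer list of exceptional weights at the seam, which the paper simply tabulates. You instead give the center an extreme label and split on the parity of $n$, arranging that the one dangerous value $s_n=f(x_{n-1})$ has the opposite parity from the even interior sums; the payoff is that your condition (ii) collapses to a clean domination inequality ($(n+1)(n+2)/2-1>2n+2$ for $n\ge 3$ in the odd scheme, $n(n+1)/2>3n-1$ for $n\ge 5$ in the even scheme, with $n=2,4$ checked directly or via the $(a,1)$-distance antimagic result). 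I verified both of your schemes on $n=5,6,7$ and the stated inequalities in their ranges, and your identification of $F_3$ as the genuine exception via Lemma \ref{not-da} is right. Either proof is complete; yours makes the mechanism (interior parity versus endpoint parity) more transparent at the price of a two-case construction, while the paper's single formula avoids the case split at the price of a messier weight table.
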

\begin{proof} By Lemma \ref{not-da}, $F_3$ is not distance antimagic. For $n \neq 3$, by defining the following vertex labeling $f$
\[f(x_{i})=\left\{
    \begin{array}[c]{llll}%
    \lceil \frac{n+2}{2}\rceil & \mbox{for}\quad i=0,\\
    i & \mbox{for}\quad i= 1,2,\cdots,\lfloor \frac{n+1}{2}\rfloor, \\
    1+i & \mbox{for}\quad i= \lfloor \frac{n+1}{2}\rfloor+1, \lfloor
    \frac{n+1}{2}\rfloor+2,\cdots,n,
    \end{array}\right.\]
we obtain all distinct vertex-weights bellow.
\[w(x_{i})=\left\{
    \begin{array}[c]{llll}%
    \frac{1}{2} (n+1)(n+2)- \lceil \frac{n+2}{2}\rceil & \mbox{for}\quad i=0,\\
    \lceil \frac{n+2}{2}\rceil + 2 & \mbox{for}\quad i= 1\\
    \lceil \frac{n+2}{2}\rceil + 2i & \mbox{for}\quad i= 2,3,\cdots,\lfloor \frac{n+1}{2}\rfloor-1
                             \quad \mbox{and}\\
                             & \quad \quad ~i= \lfloor \frac{n+1}{2}\rfloor+2,\lfloor
                             \frac{n+1}{2}\rfloor+3,\cdots,n-1,\\
    2\lceil \frac{n+2}{2}\rceil + \lfloor \frac{n+1}{2}\rfloor &
                \mbox{for}\quad i=\lfloor \frac{n+1}{2}\rfloor,\\
    2\lceil \frac{n+2}{2}\rceil + \lfloor \frac{n+1}{2}\rfloor + 2
            & \mbox{for}\quad i=\lfloor \frac{n+1}{2}\rfloor+1,\\
    \lceil \frac{n+2}{2}\rceil + n & \mbox{for}\quad i=n.
    \end{array}\right.\]
\end{proof}

%#####################################################################
\subsection{Friendship graph}

A friendship graph $f_n$ is obtained by identifying a vertex from
$n$ copies of complete graphs of order 3. Let
$V(f_n)=\{x_0, x_1, \ldots, x_{2n}\}$ where $x_0, x_{2i-1}, x_{2i}$ are the vertices in the $i$-th $K_3$, for $i=1,\ldots,n$.

\begin{theorem}
A friendship graph $f_{n}$ is $(a,d)$-distance antimagic if and only if $n = 1$ or $n=2$.
\end{theorem}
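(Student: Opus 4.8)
The plan is to exploit the rigidity of the weights in $f_n$. Writing $f(x_0)$ for the label of the center, each blade vertex is adjacent only to the center and to its partner in the same triangle, so the two blades of the $i$-th triangle have weights $f(x_0)+f(x_{2i})$ and $f(x_0)+f(x_{2i-1})$. Thus the multiset of the $2n$ blade weights is exactly $\{f(x_0)+\ell : \ell\in\{1,\dots,2n+1\}\setminus\{f(x_0)\}\}$, while the center, being adjacent to everything, has weight $w(x_0)=(n+1)(2n+1)-f(x_0)$. The first fact I would record is that the blade weights are $2n$ of the $2n+1$ consecutive integers in $[f(x_0)+1,\,f(x_0)+2n+1]$, with the single value $2f(x_0)$ omitted; when $2\le f(x_0)\le 2n$ this omission is a genuine internal gap.

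Next I would dispose of every $d\neq 1$. Since the $2n$ blade weights are pairwise distinct, a distance magic ($d=0$) labeling is impossible for all $n\ge 1$. For $d\ge 2$, whenever $n\ge 2$ the set $\{1,\dots,2n+1\}\setminus\{f(x_0)\}$ must contain two consecutive integers $\ell,\ell+1$, so the blade weights $f(x_0)+\ell$ and $f(x_0)+\ell+1$ differ by exactly $1$; as no two terms of an arithmetic progression of common difference $d\ge 2$ can differ by $1$, this is a contradiction. Hence for $n\ge 2$ the only surviving candidate is $d=1$.

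For $d=1$ the whole weight set must be $2n+1$ consecutive integers, so $w(x_0)$ has to complete the almost-consecutive block formed by the blades. I would split according to the position of $f(x_0)$. In the interior case $2\le f(x_0)\le 2n$ the center cannot merely extend the block (that would leave the internal gap and destroy consecutiveness), so it must fill the gap, forcing $w(x_0)=2f(x_0)$, i.e. $f(x_0)=(n+1)(2n+1)/3$; combined with $f(x_0)\le 2n$ this is solvable only for $n=1$. In the two endpoint cases the center must instead extend the block by one slot, yielding a quadratic in $n$ whose only admissible integer roots are $n=1$ (from $f(x_0)=1$) and $n=2$ (from $f(x_0)=2n+1$, where $w(x_0)=4n+2$ gives $2n^2-3n-2=0$). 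This case analysis is the computational heart of the proof and the main obstacle, since in each branch one must verify both that the arithmetic equation has an integer root and that the forced $f(x_0)$ lies in the correct range.

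Finally I would realise the two surviving values. For $n=1$ the graph is $K_3\simeq F_2$, which is $(a,1)$-distance antimagic by Theorem \ref{Kn}. For $n=2$ I would take $f(x_0)=5$; then for any assignment of $\{1,2,3,4\}$ to the four blades the blade weights are $\{6,7,8,9\}$ and $w(x_0)=10$, so the weight set is $\{6,7,8,9,10\}$, an arithmetic progression of difference $1$. Everything apart from the three-way case split in the preceding paragraph is routine bookkeeping.
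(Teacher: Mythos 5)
Your proof is correct, but it takes a more granular route than the paper's. The paper argues by two quick inequalities: from $3\le w(x_i)\le 4n+1$ it gets $d\le 2$, and from $w(x_0)\ge n(2n+1)$ it gets that the center's weight exceeds every blade weight by at least $2n^2-3n-1\ge 8$ for $n\ge 3$; since $w(x_0)$ is then the largest term of the progression and must sit exactly $d\le 2$ above the second largest, this is an immediate contradiction, killing all $n\ge 3$ and all $d$ at once. You instead determine the weight multiset exactly --- the blades realize $f(x_0)+\ell$ for every label $\ell\ne f(x_0)$, and the center realizes $(n+1)(2n+1)-f(x_0)$ --- which lets you rule out $d=0$ and $d\ge 2$ outright for $n\ge 2$ via the forced consecutive pair of blade weights, and then reduces $d=1$ to three small Diophantine conditions on the position of $f(x_0)$. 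Your calculations check out (in particular $(2n+1)(n-2)=0$ in the $f(x_0)=2n+1$ branch, and the verification that $f(x_0)=5$ with any assignment of $\{1,2,3,4\}$ to the blades gives weights $\{6,\dots,10\}$ for $n=2$, matching the paper's $(6,1)$-labeling). What your version buys is more information: it shows that for $n=2$ every $(a,d)$-distance antimagic labeling must have $d=1$ and center label $5$, and it replaces the paper's appeal to a figure by a verified construction; the cost is the three-way case split, where the paper's single gap estimate suffices for the nonexistence half.
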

\begin{proof} For $i=1,2,\ldots, 2n$, we have $3 \leq w(x_i) \leq 4n+1$ and so $d$ is at most $\frac{(4n+1)-3}{2n} = 2-\frac{1}{n} \leq 2$. On the other hand, $n(2n+1) \leq w(x_0) \leq n(2n+3)$. Thus we have $w(x_0)-w(x_i) \geq n(2n+1) - (4n+1) = 2n^2-3n-1$. For $n \geq 3$, $w(x_0)-w(x_i) \geq 8$, a contradiction.\\

\noindent To complete the proof, we need to consider $f_1$ and $f_2$. Since $f_1\simeq K_3$ then $f_1$ has a
$(3,1)$-distance antimagic labeling by Theorem \ref{Kn}. A $(6,1)$-distance antimagic labeling for $f_2$ is depicted in Figure \ref{figure}(c).
\end{proof}

%Now, for $n=2,$ define the
%label of vertices as follow:
%\[f(v_i)=\left \{\begin{array}{cl}
%  n+1, & \mbox{for}~ i =0, \\
%  i, & \mbox{for else}.
%  \end{array}
%\right.
%\]
%
%We obtain the set of vertex weight of $f_2,$
%$\{w(v_0),w(v_1),w(v_2),w(v_3),w(v_4)\} = \{10, 7, 6, 9,8\}$ form
%an arithmetic progression with difference 1.

\noindent Finally, a simple vertex labeling leads to the distance antimagicness of friendship graphs.
\begin{theorem}
All friendship graphs are distance antimagic.
\end{theorem}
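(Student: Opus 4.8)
The plan is to exhibit a single explicit bijection and verify distinctness of weights by exploiting the very rigid structure of $f_n$. Recall that the center $x_0$ is adjacent to all $2n$ outer vertices, while each outer vertex $x_j$ (for $1\le j\le 2n$) has degree $2$, being adjacent only to $x_0$ and to its partner inside the same triangle. Hence the weight of an outer vertex is simply $w(x_j)=f(x_0)+f(p(j))$, where $p(j)$ denotes its partner (so $p(2i-1)=2i$ and $p(2i)=2i-1$), while the center weight is $w(x_0)=\sum_{j=1}^{2n}f(x_j)$.

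First I would record the structural observation that makes most of the problem evaporate. Since $j\mapsto p(j)$ is an involution of $\{1,\dots,2n\}$, the set of outer weights is exactly $\{\,f(x_0)+f(x_j) : j=1,\dots,2n\,\}$. Because $f$ is a bijection, the labels $f(x_1),\dots,f(x_{2n})$ are pairwise distinct, so the $2n$ sums $f(x_0)+f(x_j)$ are pairwise distinct as well. Thus the outer vertices automatically receive pairwise distinct weights under \emph{any} bijection $f$; no care is needed for them, regardless of how the outer labels are distributed among the triangles.

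The only remaining task is to separate $w(x_0)$ from every outer weight. Using $\sum_{k=1}^{2n+1}k=(n+1)(2n+1)$, we have $w(x_0)=(n+1)(2n+1)-f(x_0)$. I would simply set $f(x_0)=1$ and assign the labels $2,3,\dots,2n+1$ arbitrarily to the outer vertices. Then every outer weight equals $1+f(x_j)\le 1+(2n+1)=2n+2$, whereas $w(x_0)=(n+1)(2n+1)-1=2n^2+3n$. A one-line estimate, $2n^2+3n-(2n+2)=2n^2+n-2>0$ for all $n\ge 1$, shows that $w(x_0)$ strictly exceeds the maximum outer weight, so all $2n+1$ vertex-weights are distinct.

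There is essentially no hard step here: the lone ``obstacle'' is the single inequality separating $w(x_0)$ from the outer weights, and even that is immediate from the quadratic growth of $w(x_0)$ against the linear upper bound $2n+2$ on the outer weights. One could equally well give the center a large label, but labelling $x_0$ with $1$ yields the cleanest separation and handles $n=1$ with no special casing.
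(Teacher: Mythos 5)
Your proof is correct. It follows the same basic strategy as the paper---exhibit an explicit labeling and verify the weights---but you organize it around a cleaner structural observation that the paper does not make: since each outer vertex $x_j$ has weight $f(x_0)+f(p(j))$ and $j\mapsto p(j)$ is an involution, the outer weights are the $2n$ pairwise distinct sums $f(x_0)+f(x_k)$, so they are automatically distinct under \emph{every} bijection. This reduces the whole problem to separating $w(x_0)$ from the outer weights, which your choice $f(x_0)=1$ settles by the single inequality $2n^2+3n>2n+2$. The paper instead fixes a specific assignment (center gets the extremal label, outer vertices get $1,\dots,2n$ in order), writes out closed-form weight expressions split by the parity of $i$, and checks distinctness from those formulas; as printed, its label ranges and weight formulas contain typographical slips that your argument sidesteps entirely. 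What your approach buys is a strictly stronger statement (any bijection sending $1$ to the center works) and no case analysis; what the paper's buys is an explicit arithmetic-progression-like description of the outer weights, in keeping with its treatment of the other graph families.
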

\begin{proof} We define a vertex labeling $f$ of $f_n$ as
follow
\[ f(x_i)=\left\{
    \begin{array}[c]{llll}
    n+1 & \mbox{for}\quad i=0,\\
    i & \mbox{for}\quad i= 1,2,\ldots,n,
    \end{array}\right.\]
and so we obtain the following vertex-weights
\[
w(x_i) =\left\{\begin{array}[c]{lll}
    n (2n+1) & \mbox{for} & i=0,\\
    n+2+i & \mbox{for} & i=1,3,\ldots,2n-1,\\
    n+i & \mbox{for} & i=2,4,\ldots,2n.
\end{array}\right.\]
We can see that the weights are all distinct.
\end{proof}

%=================================================================================

\section{Final remark}

\noindent Revisiting the necessary conditions for the existence of distance antimagic and $D$-distance antimagic graphs in Lemmas \ref{not-da} and \ref{not-Dda}, we strongly believe that those conditions are also sufficient and propose the following conjectures.

\begin{conjecture}
A graph is distance antimagic if and only if it does not contain two vertices with the same neighborhood.
\end{conjecture}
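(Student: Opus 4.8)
The forward implication is already in hand: it is exactly the contrapositive of Lemma \ref{not-da}, since two vertices $u,w$ with $N(u)=N(w)$ force $w(u)=w(w)$. So the entire content is the converse, and that is where I would concentrate: every graph $G$ on $v$ vertices in which all open neighborhoods are pairwise distinct admits a bijection $f:V\to\{1,\dots,v\}$ with pairwise distinct vertex-weights. I would first recast this linearly. Writing $f$ as a label vector and $A$ for the adjacency matrix, the weight vector is $Af$, and the weights of $u$ and $w$ coincide iff $(\mathrm{row}_u A-\mathrm{row}_w A)\cdot f=0$. The vector $\mathrm{row}_u A-\mathrm{row}_w A$ is supported exactly on $N(u)\triangle N(w)$ with entries in $\{-1,+1\}$, and the hypothesis $N(u)\neq N(w)$ is precisely what guarantees this vector is nonzero. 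Thus the task becomes: given the $\binom{v}{2}$ nonzero linear forms $\ell_{u,w}$ with coefficients in $\{-1,0,1\}$, find a permutation of $\{1,\dots,v\}$ avoiding every hyperplane $\ell_{u,w}=0$.

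Second, I would dispose of the easy pairs. If $|N(u)\triangle N(w)|\leq 2$ then $\ell_{u,w}$ is one of $\pm f_a$, $f_a+f_b$, or $f_a-f_b$; for a bijective $f$ the first two are positive and the third is nonzero by injectivity, so these pairs never collide \emph{for any} labeling. Hence only pairs with $|N(u)\triangle N(w)|\geq 3$ can cause trouble, and for those genuine cancellation (e.g.\ $1+2=3$) is possible, so they must be controlled probabilistically. The plan is to take $f$ a uniformly random bijection, bound $\Pr[\ell_{u,w}(f)=0]$ by an anti-concentration estimate for linear statistics of random permutations (the probability decaying with $|N(u)\triangle N(w)|$), and then combine the bad events $B_{u,w}=\{\ell_{u,w}(f)=0\}$ through a Lov\'asz-Local-Lemma argument for random permutations (Lu--Sz\'ekely), exploiting that $B_{u,w}$ depends only on the labels inside $N(u)\cup N(w)$, so $B_{u,w}$ and $B_{u',w'}$ are independent unless these neighborhoods meet.

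The main obstacle is density. In sparse or bounded-degree graphs the dependency degree in the local lemma is small and the collision probabilities are tiny, so the argument should close; but in dense graphs a single vertex can lie in $\Theta(v)$ neighborhoods, the dependency degree balloons to $\Theta(v^2)$, and it overwhelms any anti-concentration bound of order $o(1)$ — yet dense graphs with pairwise distinct neighborhoods are exactly graphs the conjecture still claims to cover. A purely algebraic alternative — applying the Combinatorial Nullstellensatz to $\prod_{u<w}\ell_{u,w}(x)\cdot\prod_{i<j}(x_i-x_j)$ and exhibiting a nonvanishing coefficient of a monomial of full multidegree — sidesteps probability entirely, but then the obstacle migrates to evaluating that coefficient for an arbitrary adjacency structure, which seems out of reach with current tools. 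Consequently I expect the probabilistic route to settle the bounded-degree (and more generally sparse) cases cleanly, while the dense regime is the genuine difficulty and is, I believe, why the statement must for now remain a conjecture.
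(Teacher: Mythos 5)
There is a fundamental mismatch here: the statement you are attacking is stated in the paper as a \emph{conjecture}, with no proof offered --- the authors explicitly liken its difficulty to the Hartsfield--Ringel antimagic conjecture. So there is no ``paper's proof'' to compare against, and your proposal, by your own admission, is not a proof either. What you do establish is only the forward implication, which is exactly Lemma \ref{not-da} restated in contrapositive form; the entire mathematical content of the conjecture is the converse, and that is precisely the part you leave open. Your reduction of the converse to avoiding the hyperplanes $\ell_{u,w}(f)=0$ for a permutation $f$ is correct, and the observation that pairs with $|N(u)\triangle N(w)|\leq 2$ can never collide under any bijection is a genuine (if small) piece of progress. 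But the probabilistic step is where the argument fails to close, and not only in the dense regime you flag: even for a graph of maximum degree $\Delta$, the event $B_{u,w}$ has collision probability of order $1/v$ (a point probability for a sum of boundedly many distinct labels from $\{1,\dots,v\}$), while the number of events whose supporting label sets meet $N(u)\cup N(w)$ is of order $\Delta^2 v$, so the product $p\cdot D$ is a constant of order $\Delta^2$ and exceeds the Lov\'asz-Local-Lemma threshold for every $\Delta\geq 1$. Your claim that the sparse case ``should close cleanly'' is therefore itself unsubstantiated; some additional idea (sharper anti-concentration exploiting the structure of $N(u)\triangle N(w)$, a clustering of the bad events, or a derandomized switching argument) would be needed even there.

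To be fair to you: your honest conclusion --- that the dense regime is the genuine obstruction and that the statement must for now remain a conjecture --- agrees exactly with the paper's own position. As a research plan the write-up is sensible and the linear-algebraic reformulation is the right first move; as a proof of the stated conjecture it has a gap that coincides with the open problem itself.
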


\begin{conjecture}
A graph is $D$-distance antimagic if and only if it does not contain two vertices with the same $D$-neighborhood.
\end{conjecture}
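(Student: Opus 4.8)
The statement is a biconditional whose ``only if'' direction is already in hand: if $G$ contains two vertices $u,w$ with $N_D(u)=N_D(w)$, then Lemma~\ref{not-Dda} shows $G$ is not $D$-distance antimagic. So the entire content lies in the converse, and the plan is to attack it by reformulating the existence of a $D$-distance antimagic labeling as a hyperplane-avoidance problem for permutations.

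View a bijection $f$ as an assignment of the values $1,\ldots,v$ to variables $x_1,\ldots,x_v$, and write each $D$-weight as a linear form $L_i(x)=\sum_{j\in N_D(i)} x_j$ with $0/1$ coefficients. The hypothesis that no two vertices share a $D$-neighborhood says precisely that the coefficient vectors of the $L_i$ are pairwise distinct, hence each difference $L_i-L_j$ is a nonzero linear form with coefficients in $\{-1,0,1\}$. The graph is $D$-distance antimagic iff some permutation of $\{1,\ldots,v\}$ makes all the $L_i$ distinct, i.e.\ avoids every hyperplane $H_{ij}=\{L_i=L_j\}$. First I would record that if arbitrary distinct real labels were permitted the claim would be immediate, since finitely many hyperplanes cannot cover a set of points in general position; the real difficulty is confining the labels to the set $\{1,\ldots,v\}$.

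The natural tool is the probabilistic method. Choose $f$ uniformly among all $v!$ permutations and, for each pair $(i,j)$ with $N_D(i)\ne N_D(j)$, let $B_{ij}$ be the bad event $L_i(f)=L_j(f)$. One would bound $\Pr[B_{ij}]$ in terms of the symmetric difference $S_{ij}=N_D(i)\triangle N_D(j)$ and then apply a version of the Lov\'asz Local Lemma adapted to random permutations, exploiting that $B_{ij}$ depends only on the labels placed on $S_{ij}$, so $B_{ij}$ and $B_{kl}$ are (nearly) independent when $S_{ij}\cap S_{kl}=\emptyset$. If the collision probabilities and the dependency degree can be controlled, the Local Lemma yields a permutation avoiding all $B_{ij}$ simultaneously, giving the desired labeling. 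As a warm-up I would check that this scheme reproduces the explicit successes obtained earlier for cycles, suns, prisms, wheels, fans, and friendship graphs, where the relevant symmetric differences are large and comparatively few.

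The main obstacle is uniformity of the probabilistic bound. When two vertices have $D$-neighborhoods that differ in only a couple of places, $|S_{ij}|$ is small and $\Pr[B_{ij}]$ can be as large as order $1/v$; a graph may contain many such near-twin pairs, and the permutation constraint couples all the events globally, so neither the crude union bound nor the off-the-shelf Local Lemma closes the argument. Overcoming this would require either a sharper, geometry-aware estimate of how a single linear form with $\pm1$ coefficients distributes over permutations, or a constructive swapping procedure that repairs a collision $L_i=L_j$ by exchanging two labels inside $S_{ij}$ while provably creating no new collisions. Establishing termination of such a repair process, or equivalently proving the required Local-Lemma-type bound in full generality, is exactly the gap that keeps the statement a conjecture rather than a theorem.
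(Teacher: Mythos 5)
The statement you were asked to prove is not a theorem of the paper at all: it is posed as a conjecture in the final remark (Conjecture 3.2), the authors establish only the ``only if'' direction (Lemma \ref{not-Dda}, which you correctly invoke), and they explicitly write that proving or disproving it ``is likely to be a hard problem.'' So there is no paper proof to compare against; the only question is whether your argument closes the open ``if'' direction, and it does not --- as you yourself concede. Your entire converse rests on a Local-Lemma-type bound over random permutations, or alternatively a terminating label-swapping repair procedure, neither of which you establish. That is a genuine gap, not a routine verification, and the failure mode you flag is real. Concretely: when $N_D(i)$ and $N_D(j)$ differ in few vertices the difference form has small support, e.g.\ $L_i-L_j=f(p)+f(q)-f(r)$, and for a uniform random bijection $\Pr[f(p)+f(q)=f(r)]$ is of order $1/v$; since there can be up to $\binom{v}{2}$ pairs $(i,j)$, the expected number of collisions can grow linearly in $v$, so the union bound fails outright, and the dependency structure (every event is coupled to every other through the global permutation constraint) defeats the off-the-shelf lopsided Local Lemma as well. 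Note also that your accounting of dangerous pairs is incomplete: not every small symmetric difference threatens a collision --- if $|S_{ij}|=2$ with one vertex on each side then $L_i-L_j=f(p)-f(q)\neq 0$ automatically, and if both lie on one side the difference is a sum of positive labels and cannot vanish --- so a successful argument would first have to classify which small-support $\pm 1$ forms can actually vanish on a permutation, which your sketch does not do.

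In short, your framing of the problem as hyperplane avoidance over the symmetric group is sensible, and your treatment of the necessary direction via Lemma \ref{not-Dda} matches the paper; but what you have written is a research program with an honestly acknowledged hole exactly where the proof should be. Since the paper itself leaves the statement open --- supporting it only with the verified families (cycles, suns, wheels minus $W_4$, fans minus $F_3$, friendship graphs) as evidence --- the correct assessment is that neither you nor the authors prove it, and your write-up should be presented as a proposed line of attack on the conjecture, not as a proof.
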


\noindent As with the antimagic conjecture of Harstfield and Ringel \cite{HR90}, proving or disproving the afore-mentioned conjectures is likely to be a hard problem.

%#####################################################################

\end{document}